\newtheorem{theorem}{Theorem}[section]
\newtheorem{lemma}[theorem]{Lemma}
\newtheorem{remark}[theorem]{Remark}
\newtheorem{exemplo}[theorem]{Example}
	\title{Existence of solutions for nonlinear $p-$Laplacian difference equations.}
	\date{}
	\author[1]{\textbf{Lorena Saavedra}\footnote{Corresponding author. Email: lorena.saavedra@usc.es}}
	\author[2,3]{\textbf{Stepan Tersian}\footnote{Email: sterzian@uni-ruse.bg}
		}
	\affil[1]{Departamento de An\'alise Matem\'atica, Facultade de Matem\'aticas, Universidade de Santiago de Compostela, Santiago de Compostela, Galicia,
		Spain.}
	\affil[2]{Department of Mathematics, University of Ruse, Ruse, Bulgaria. }
	\affil[3]{Associate at: Institute of Mathematics and Informatics, Bulgarian Academy of Sciences, 1113- Sofia, Bulgaria.}
\begin{document}

	\maketitle
	\begin{abstract}		
		The aim of this paper is the study of existence of solutions for nonlinear $p-$Laplacian difference equations. In the first part, the existence of a nontrivial homoclinic solution for a discrete $p-$Laplacian problem is proved. The proof is based on the mountain-pass theorem of Brezis and Nirenberg.	Then, we study the existence of multiple solutions for a discrete $p-$Laplacian boundary value problem. In this case the proof is based on the theorem of D. Averna and G. Bonanno, which ensures the existence of three critical points for a suitable functional.
	\end{abstract}
	
	\section{Introduction}
This paper is divided in two parts. Both of them are devoted to study the existence of one or multiple solution of problems with $p-$Laplacian difference equations:
\[\Delta^2\left( \varphi_{p_2}\left( \Delta^2u(k-2)\right) \right) -a\,\Delta\left( \varphi_{p_1}\left( \Delta u(k-1)\right) \right) +V(k)\,\varphi_q\left( u(k)\right) =\lambda f(k,u(k))\,.\]

The first part is based on the mountain-pass theorem of Brezis and Nirenberg \cite{BreNir}. Following the steps as \cite{CaLiTe}, we obtain the existence of an homoclinic solution for the given equation.

In the second part, we obtain the existence of at least three solutions for the difference equation with $p_1=p_2=q$ and the Dirichlet boundary conditions, by generalizing a result given in \cite{CanGio} for the problem
\[-\Delta \left( \varphi_p\left( \Delta u(k-1)\right) \right) =\lambda\,f(k,u(k))\,,\ k\in[1,T]\,,\quad u(0)=u(T+1)=0\,.\]

 Such result is obtained by applying \cite[Theorem 2.1]{AvBo} to our boundary value problem.

In both cases, we show how our arguments should be modified in order to obtain the analogous result for  higher order problems.

The study of $p-$Laplacian difference equations has been developed in the literature. In addition to the previously mentioned (\cite{CaLiTe}, \cite{CanGio}), we refer \cite{He}, where the following problem is studied:
\[\Delta \left( \varphi_p\left( \Delta u(k-1)\right) \right)+a(t)\,f(k,u(k))=0\,,\ k\in[1,T+1]\,,\quad \Delta u(0)=u(T+2)=0\,,\]
where $a(t)$ is a is a positive function. Moreover, in \cite{WaGua}, the existence of three positive solutions of this problems is studied. 

Recently, in \cite{Dim}, it is proved the existence of at least three solutions of the problem
\begin{eqnarray}
\nonumber \Delta^2\left( \varphi_{p}\left( \Delta^2u(k-2)\right) \right) +\alpha \,\varphi_p\left( u(k)\right)& =&\lambda f(k,u(k))\,,\quad k\in[1,T]\,,\\\nonumber
u(0)=\Delta u(-1)=\Delta^2 u(T)&=&0\,,\\\nonumber
\Delta \left( \varphi_p\left( \Delta^2 u\left( T-1\right) \right) \right)& =&\mu g(u(T+1))\,,\end{eqnarray}
	 where $\alpha$, $\lambda$ and $\mu$ are real parameters, $f$ and $g$ are continuous.
	 
	 Moreover, we refer \cite{IaTe},\cite{MiRaTe} and \cite{TaLiXi}, where the existence of homoclinic solution for different discrete second order problems is studied.
	 
	 This paper is structured in two parts. We introduce the considered problems in the beginning of these parts. Then, we construct the related variational formulation. After some preliminaries, the existence solutions results are proved. Finally, we give  examples to the results obtained.
	 \section{Homoclinic Solutions}
	 This section is focused on the study of the existence of homoclinic solutions for the following problem:
	 
	 \begin{eqnarray}
	 	\nonumber
	 	\Delta^2\left( \varphi_{p_2}\left( \Delta^2u(k-2)\right) \right) -a\,\Delta\left( \varphi_{p_1}\left( \Delta u(k-1)\right) \right) +V(k)\,\varphi_q\left( u(k)\right) &=&\lambda f(k,u(k))\,,\\\label{P::h1}\\\nonumber
	 	\lim_{\left| k\right| \rightarrow +\infty}\left| u(k)\right| &=&0\,,
	 	\end{eqnarray}
	 where $a>0$ is fixed, $p_i\geq q>1$ for $i=1,2$ and \begin{eqnarray}\label{Ec::D1}\Delta u(k)&=&u(k+1)-u(k)\,,\\\label{Ec::D2} \Delta^i u(k)&=&\Delta^{i-1} u(k+1)-\Delta^{i-1} u(k)\,,\ \text{if } i\geq 2\end{eqnarray} are the difference operators.
	 
	 For each $p>1$:  
	 \begin{eqnarray}
	 \label{Ec::Phi} 	\varphi_p\colon\mathbb{R} &\longrightarrow&\mathbb{R}
	 \\\nonumber t&\longmapsto&\varphi_p(t)=t\,\left| t\right| ^{p-2}\,,
	 \end{eqnarray}and $V\colon \mathbb{Z}\rightarrow \mathbb{R}$ is a $T-$periodic positive function for $T$ a fixed integer. Let us denote 
	 \[0<V_0=\min\{V(0),\dots,V(T-1)\}\,,\quad \text{and}\ V_1=\max\{V(0),\dots,V(T-1)\}\,.\]
	 
	 Moreover, $f\colon \mathbb{Z}\times \mathbb{R}\rightarrow \mathbb{R}$ verifies the following assumptions:
	 \begin{itemize}
	 	\item[$(F_1)$] $f(k,\cdot)\colon \mathbb{R}\rightarrow \mathbb{R}$ is a continuous function and $f(\cdot,t)\colon\mathbb{Z}\rightarrow \mathbb{R}$ is a $T-$periodic function.
	 	\item[$(F_2)$] The potential function,
	 	$F(k,t)=\int_0^tf(k,t)\,dt$,
	 	satisfies the Rabinowitz's type condition:
	 	
	 	There exist $\mu\in \mathbb{R}$, such that $\mu> p_i\geq q>1$, for $i=1,2$ and $s>0$, such that
	 	\begin{eqnarray}
	 	\nonumber
	 	\mu\,F(k,t)\leq t\,f(k,t)\,, && \forall k\in\mathbb{Z}\,,\  t\neq0\,,\\\nonumber
	 	F(k,t)>0\,,&& \forall k\in\mathbb{Z}\,,\ \forall t\geq s>0\,.
	 	\end{eqnarray}
	 	\item[$(F_3)$]$f(k,t)=o\left( \left| t\right| ^{q-1}\right)$ as $\left| t\right| \rightarrow 0$. 
	 \end{itemize}
	 
	 Even though we are going to focus on problem \eqref{P::h1}, we are going to indicate in each step how the results can be modified in order to obtain the result of existence of homoclinic solution for the following problem 	 
	 \begin{eqnarray}
	 	\nonumber
	 	\Delta^n\left( \varphi_{p_n}\left( \Delta^nu(k-2)\right) \right) +\sum_{i=1}^{n-1}(-1)^ia_i\,\Delta^{n-i}\left( \varphi_{p_{n-i}}\left( \Delta^{n-i} u(k-1)\right) \right)&& \\\nonumber
	 	+(-1)^nV(k)\,\varphi_q\left( u(k)\right) +(-1)^{n+1}\lambda f(k,u(k))&=&0\,,\\	\label{P::h2}\\\nonumber
	 	\lim_{\left| k\right| \rightarrow +\infty}\left| u(k)\right| &=&0\,,
	 	\end{eqnarray}
	 where $V$, $f$, $\varphi_p$ and $\Delta^j$ have been previously introduced, $\mu>p_i\geq q>1$ and $a_i\geq 0$ for all $j\in\{1,2,\dots,n\}$, with $a_n=1$.
	 
	Define $\phi_p(t)=\dfrac{\left| t\right| ^p}{p}$. It is trivial that $\phi_p'(t)=\varphi_p(t)$ for every $p>1$.
	
	Let $\ell^q=\{\left( u(k)\right) _{k\in\mathbb{Z}}\ \mid \sum_{k\in\mathbb{Z}}\left| u(k)\right| ^q<\infty\}$ be the Banach space with the norm $|u|_q^q=\sum_{k\in\mathbb{Z}}|u(k)|^q$ and $
	J\colon \ell^q\longrightarrow \mathbb{R}$ the functional $J(u)=\Phi(u)-\lambda\sum_{k\in \mathbb{Z}} F(k,u(k))$, where
	\[\Phi(u)=\sum_{k\in\mathbb{Z}}\left( \phi_{p_2}\left( \Delta^2\,u(k-2)\right) +a\,\phi_{p_1}\left(\Delta\,u(k-1)\right) +V(k)\,\phi_q(u(k))\right) \,.\]
	
	\begin{remark}
		In the case of problem \eqref{P::h2}, we consider
		\[	\Phi(u)=\sum_{k\in\mathbb{Z}}\left( \phi_{p_n}\left( \Delta^n\,u(k-n)\right) +\sum_{i=1}^{n-1}a_i\,\phi_{p_{n-i}}\left(\Delta^{n-i}\,u(k-(n-i))\right) +V(k)\,\phi_q(u(k))\right) \,.\]
	\end{remark}
	
	We have the following result:
	
	\begin{lemma}\label{L::2}
		The functional $J\colon \ell^q\rightarrow \mathbb{R}$ is well defined and $C^1-$differentiable. 
		
		Moreover, its critical points are solutions of \eqref{P::h1}.
	\end{lemma}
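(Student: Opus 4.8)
The plan is to prove the lemma in three stages corresponding to its three claims: (i) $J$ is well-defined on $\ell^q$, (ii) $J\in C^1(\ell^q,\mathbb{R})$ with an explicit Gateaux derivative, and (iii) critical points of $J$ coincide with homoclinic solutions of \eqref{P::h1}.

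First, for well-definedness, I would show each of the three sums in $\Phi(u)$ converges for $u\in\ell^q$, together with $\sum_k F(k,u(k))$. The key observation is the chain of continuous embeddings $\ell^q\hookrightarrow\ell^{p}$ for $p\geq q$, which gives $|u|_p\leq|u|_q$; since $p_1,p_2\geq q$, the term $\sum_k V(k)\phi_q(u(k))$ is controlled by $V_1|u|_q^q/q<\infty$, and the finite-difference terms are handled by bounding $|\Delta u(k)|\leq|u(k+1)|+|u(k)|$ and $|\Delta^2u(k-2)|$ by a finite combination of shifts of $u$, so that $\sum_k\phi_{p_i}(\Delta^{(\cdot)}u)$ is dominated by a constant times $|u|_{p_i}^{p_i}<\infty$. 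For the nonlinear term, assumptions $(F_1)$ and $(F_3)$ give a bound $|F(k,t)|\leq C(|t|^q+|t|^{\mu})$ (or a comparable growth estimate), and since $u\in\ell^q\subset\ell^{\mu}$ as $\mu>q$, this sum converges absolutely. The $T$-periodicity in $k$ ensures the constants are uniform in $k$.

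Second, for $C^1$-differentiability I would compute the directional derivative $\langle J'(u),v\rangle$ for $u,v\in\ell^q$ by differentiating under the sum. Since $\phi_p'=\varphi_p$ and using summation by parts (the discrete analogue of integration by parts, valid here because the homoclinic decay makes all boundary terms vanish), I expect
\[
\langle J'(u),v\rangle=\sum_{k\in\mathbb{Z}}\Bigl(\varphi_{p_2}(\Delta^2u(k-2))\,\Delta^2v(k-2)+a\,\varphi_{p_1}(\Delta u(k-1))\,\Delta v(k-1)+V(k)\varphi_q(u(k))v(k)\Bigr)-\lambda\sum_{k\in\mathbb{Z}}f(k,u(k))v(k).
\]
To upgrade Gateaux to Fréchet differentiability and continuity of $u\mapsto J'(u)$, I would verify that each Nemytskii-type map $u\mapsto\varphi_{p_i}(\Delta^{(\cdot)}u)$ and $u\mapsto f(\cdot,u(\cdot))$ is continuous from $\ell^q$ into the appropriate dual sequence space, using the continuity of $\varphi_p$ and of $f(k,\cdot)$ from $(F_1)$, together with dominated convergence for series.

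Third, for the variational characterization I would rearrange $\langle J'(u),v\rangle$ by applying summation by parts twice to the second-difference term and once to the first-difference term, transferring all differences onto $\varphi_{p_i}(\cdots)$ to produce the operators $\Delta^2(\varphi_{p_2}(\Delta^2u(k-2)))$ and $-a\,\Delta(\varphi_{p_1}(\Delta u(k-1)))$; testing against $v=e_j$ (the unit sequence supported at $k=j$) then shows that $J'(u)=0$ is equivalent to the difference equation in \eqref{P::h1} holding for every $k$. The decay condition $\lim_{|k|\to\infty}|u(k)|=0$ is automatic from $u\in\ell^q$. I expect the main obstacle to be the rigorous justification of differentiating the infinite sums term by term and the continuity of the derivative map: the finite-difference structure couples neighbouring indices, so the domination required for dominated convergence must be established carefully and uniformly, and the $T$-periodicity together with the growth bounds from $(F_2)$–$(F_3)$ is exactly what supplies the needed uniform control.
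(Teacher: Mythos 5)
Your overall route is the same as the paper's: control the two difference terms by shifts of $u$ together with the embedding $\ell^q\subset\ell^{p_i}$ (the paper does this via the convexity inequality $\left(\frac{x+y}{2}\right)^p\le\frac{x^p+y^p}{2}$, getting $\sum_k\phi_{p_2}\left(\Delta^2u(k-2)\right)\le\frac{4^{p_2}}{p_2}\,|u|_{p_2}^{p_2}$ and the analogous bound for the first-difference term), bound the potential term by $V_1|u|_q^q$, compute $\left\langle J'(u),v\right\rangle$, and shift summation indices over $\mathbb{Z}$ (summation by parts with no boundary terms) so that the differences land on the $\varphi_{p_i}$ factors, whence $J'(u)=0$ is equivalent to the difference equation in \eqref{P::h1}. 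Your stages (i) for the difference and potential terms and (iii) are exactly this; your stage (ii) is if anything more careful than the paper, which simply exhibits the partial derivatives and observes that they are continuous.

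The genuine gap is your treatment of $\sum_{k}F(k,u(k))$: you assert that $(F_1)$ and $(F_3)$ yield a global bound $\left|F(k,t)\right|\le C\left(|t|^q+|t|^{\mu}\right)$. No such bound follows from the hypotheses. $(F_3)$ constrains $f$ only near $t=0$, and $(F_2)$ is a \emph{lower} growth (superlinearity) condition: it forces $F(k,t)\ge c\,|t|^{\mu}$ for $|t|$ large and imposes no upper bound at all. Concretely, $F(k,t)=|t|^{\mu}e^{|t|}$ (with $f=\partial F/\partial t$) satisfies $(F_1)$--$(F_3)$, since $t\,f(k,t)=(\mu+|t|)\,F(k,t)\ge\mu\,F(k,t)$, yet it admits no polynomial bound, so the proposed domination of the series by $|u|_q^q+|u|_{\mu}^{\mu}$ collapses, and no ``comparable growth estimate'' can rescue it. The repair is the paper's observation that convergence of this series is a question about $t$ near $0$, not about growth at infinity: since $u\in\ell^q$ forces $u(k)\to0$, there exists $N$ such that $|u(k)|<\delta$ for $|k|>N$, where $\delta$ is the threshold from $(F_3)$, chosen uniformly in $k$ thanks to $T$-periodicity; on that tail $\left|F(k,u(k))\right|\le|u(k)|^q$, which is summable, and the finitely many terms with $|k|\le N$ are finite because each $F(k,\cdot)$ is continuous. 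With that substitution your argument is sound and coincides with the paper's proof.
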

	\begin{proof}
		Let us see first that it is well-defined. In order to do that, consider the following inequality:
		\begin{equation}\label{Ec::Inbc}\left( \dfrac{x+y}{2}\right) ^p\leq \frac{x^p+y^p}{2}\,,\end{equation} which is verified for every non negative $x$, $y$ and $p>1$.
		
		Applying this inequality for $p_2>1$ twice, we have:
		\begin{eqnarray}\nonumber \sum_{k\in\mathbb{Z}}\phi_{p_2}\left( \Delta^2\left( u(k-2)\right) \right) &\leq& \dfrac{1}{p_2}\sum_{k\in\mathbb{Z}}2^{p_2-1}\left( \left|\Delta u(k-1)\right|^{p_2}+\left|\Delta u(k-2)\right|^{p_2}\right) \\\nonumber &\leq& \dfrac{1}{p_2}\sum_{k\in\mathbb{Z}} 4^{p_2-1}\left( \left|u(k)\right|^{p_2}+2\left| u(k-1)\right|^{p_2} +\left| u(k-2)\right|^{p_2}\right)\\\nonumber &=&\frac{4^{p_2}}{p_2}  \sum_{k\in\mathbb{Z}} \left| u(k)\right| ^{p_2}\,.\end{eqnarray}
		
		Now, since $p_2\geq q$, it is well-known that $\ell^{q}\subset \ell^{p_2}$. Hence, using the fact that $u\in \ell^q$, we conclude that 
		\[\sum_{k\in\mathbb{Z}}\phi_{p_2}\left( \Delta^2\left( u(k-2)\right) \right)\leq \frac{4^{p_2}}{p_2}  \sum_{k\in\mathbb{Z}} \left| u(k)\right| ^{p_2}<+\infty\,.\]
		
		Now, let us apply inequality \eqref{Ec::Inbc} for $p_1$. We have, taking into account  that $u\in \ell^q\subset \ell^{p_1}$:
		
				\begin{eqnarray}\nonumber \sum_{k\in\mathbb{Z}} \left|\Delta u(k-1)\right|^{p_1} &\leq& \dfrac{1}{p_1}\sum_{k\in\mathbb{Z}} 2^{p_1-1}\left( \left|u(k)\right|^{p_1}+\left| u(k-1)\right|^{p_1}\right)\\\nonumber &=&\frac{2^{p_1}}{p_1}  \sum_{k\in\mathbb{Z}} \left| u(k)\right| ^{p_1}<+\infty\,.\end{eqnarray}
				
				Moreover, we have
				\[\sum_{k\in\mathbb{Z}}V(k)\left| u(k)\right| ^q\leq V_1\sum_{k\in\mathbb{Z}}\left| u(k)\right| ^q<+\infty\,.\]
				
				Finally, for all $\delta\in(0,1)$, there exists $N>0$ sufficiently large such that $\left| u(k)\right|^q<\delta<1$ if $\left| k\right| >N$. Moreover, under the assumption ($F_3$), we have
				\[\exists \delta\in(0,1)\ \text{such that } F(k,u(k))<\left| u(k)\right| ^q<\delta <1\,, \left| k\right| >N\,.\]
				Thus, $\sum_{k\in\mathbb{Z}}F(k,u(k))<+\infty$ and $J$ is a well defined functional in $\ell^q$.
				
			For all $v\in \ell^q$, we have:			
			\begin{eqnarray}\nonumber
			\left\langle J'(u),v\right\rangle &=& \sum_{k\in\mathbb{Z}}\left( \varphi_{p_2}\left( \Delta^2u(k-2)\right) \,\Delta^2v(k-2)+a\,\varphi_{p_2}\left( \Delta u(k-1)\right) \Delta v(k-1)\right) \\\nonumber &&+\sum_{k\in\mathbb{Z}} V(k)\,\varphi_q(u(k))v(k)-\lambda\sum_{k\in\mathbb{Z}} f(k,u(k))\,v(k) \,.
			\end{eqnarray}
			
			Having into account that,
			\begin{eqnarray}\nonumber
			\sum_{k\in\mathbb{Z}} \varphi_{p_2}\left( \Delta^2u(k-2)\right) \,\Delta^2v(k-2)&=&	\sum_{k\in\mathbb{Z}} \varphi_{p_2}\left( \Delta^2u(k-2)\right) \,\left( v(k)-2v(k-1)+v(k-2)\right) \\\nonumber
			&=&	\sum_{k\in\mathbb{Z}}\Delta^2\left(  \varphi_{p_2}\left( \Delta^2u(k-2)\right)\right)  \,v(k)\,,\\\nonumber\\\nonumber
			\sum_{k\in\mathbb{Z}}\varphi_{p_2}\left( \Delta u(k-1)\right) \Delta v(k-1)&=&\sum_{k\in\mathbb{Z}}\varphi_{p_2}\left( \Delta u(k-1)\right) \left( v(k)- v(k-1)\right) \,,\\\nonumber
			&=&-\sum_{k\in\mathbb{Z}}\Delta\left( \varphi_{p_2}\left( \Delta u(k-1)\right)\right)v(k)\,,
			\end{eqnarray}
			we conclude that for all $v\in \ell^q$,
				\begin{eqnarray}\nonumber
				\left\langle J'(u),v\right\rangle &=& \sum_{k\in\mathbb{Z}}\left(\Delta^2\left( \varphi_{p_2}\left( \Delta^2u(k-2)\right)\right) -a\,\Delta\left( \varphi_{p_2}\left( \Delta u(k-1)\right)\right) \right)  v(k) \\\nonumber &&+\sum_{k\in\mathbb{Z}} V(k)\,\varphi_q(u(k))v(k)-\lambda\sum_{k\in\mathbb{Z}} f(k,u(k))\,v(k) \,.
				\end{eqnarray}
				
				So, we can obtain the partial derivatives as follows:
				\[\dfrac{\partial J(u)}{\partial u(k)}=\Delta^2\left( \varphi_{p_2}\left( \Delta^2u(k-2)\right)\right) -a\,\Delta\left( \varphi_{p_2}\left( \Delta u(k-1)\right)\right)+V(k)\,\varphi_q(u(k))-\lambda f(k,u(k))\,,\]
				which are continuous functions. 
				
				Moreover, from this we conclude that the critical points of $J$ are the solutions of \eqref{P::h1}.
	\end{proof}
	
	\begin{remark}
	In order to prove Lemma \ref{L::2} for the problem \eqref{P::h2}, we obtain using an induction argument that for each $i=0,\dots,n-1$, the following inequality is fulfilled:
	\[\sum_{k\in\mathbb{Z}}\phi_{p_{n-i}}\left( \Delta^{n-i}u(k-(n-i))\right) \leq \dfrac{2^{n-i}}{p_{n-i}}\sum_{k\in\mathbb{Z}}\left|u(k)\right| ^{p_{n-i}}\,,\]
	hence, since $p_{n-i}\geq q$, we have that $u\in\ell^q\subset\ell^{p_{n-i}}$. Thus, we can conclude that $J$ is a well-defined functional.
	
	Moreover, also using an induction argument, we can prove that for all $v\in\ell^q$:
\[\sum_{k\in\mathbb{Z}}\varphi_{p_{i}}\left( \Delta^{i}u(k-i)\right) \Delta^{i}v(k-i)=(-1)^{i}\sum_{k\in\mathbb{Z}}\Delta^{i}\left(\varphi_{i}\left( \Delta^{i}u(k-i)\right) \right) v(k)\,,\]
which allows us to prove that every critical point of $J$ is a solution of problem \eqref{P::h2} with the same arguments as in Lemma \ref{L::2}.
	 
	\end{remark}
	
	Now, let us introduce the mountain-pass theorem of Brezis and Nirenberg \cite{BreNir}, which we are going to use in order to obtain the homoclinic solutions of \eqref{P::h1} and \eqref{P::h2}.
	
	Let $X$ be a Banach space with norm $\left\| \cdot \right\| $ and $I\colon X\rightarrow \mathbb{R}$ be a $C^1-$functional. 
	
	We say that $I$ satisfies the $(PS)_c$ condition if every sequence $(x_k)\subset X$ such that 
	\begin{equation}\label{Ec::PSc}I(x_k)\rightarrow c\,,\quad I'(x_k)\rightarrow 0\,,\end{equation}
	has a convergent subsequence in $X$. Let us denote as a $(PS)_c-$sequence, every sequence $(x_k)\subset X$ such that verifies \eqref{Ec::PSc}.
	
	\begin{theorem}[{\cite[Mountain-pass theorem, Brezis and Nirenberg]{BreNir}}]\label{T::2}
		Let $X$ be a Banach space with norm $\left\| \cdot\right\|$, $I\in C^1(X,\mathbb{R})$ and suppose that there exist $r>0$, $\alpha>0$ and $e\in X$ such that $\left\| e\right\| >r$ and
		\begin{enumerate}
			\item $I(x)\geq \alpha$ if $\left\| x\right\| =r$,
			\item $I(e)<0$.
		\end{enumerate}
		Let $c=\inf_{\gamma\in\Gamma}\left\lbrace \max_{t\in[0,1]}I(\gamma(t))\right\rbrace \geq \alpha$, where
		\[\Gamma=\left\lbrace \gamma\in C([0,1],X)\, \mid\, \gamma(0)=0\,,\ \gamma(1)=e\right\rbrace \,.\]
		Then, there exists a $(PS)_c-$sequence for $I$.
		
		Moreover, if $I$ satisfies the $(PS)_c$ condition, then $c$ is a critical value of $I$, that is, there exists $u_0\in X$ such that $I(u_0)=c$ and $I'(u_0)=0$.
			
		\end{theorem}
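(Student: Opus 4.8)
The plan is to prove the two conclusions separately, exactly as the statement suggests: the existence of a $(PS)_{c}$--sequence will be established \emph{unconditionally}, via Ekeland's variational principle on the space of admissible paths, and only in the final step will the $(PS)_{c}$ condition be invoked to upgrade the sequence to a genuine critical point. Before anything else I would record the inequality $c\geq\alpha$ asserted in the statement. Every $\gamma\in\Gamma$ satisfies $\gamma(0)=0$ with $\left\|\gamma(0)\right\|=0<r$ and $\gamma(1)=e$ with $\left\|\gamma(1)\right\|>r$; since $t\mapsto\left\|\gamma(t)\right\|$ is continuous, the intermediate value theorem yields $t^{*}\in(0,1)$ with $\left\|\gamma(t^{*})\right\|=r$, and hypothesis (1) then gives $\max_{t\in[0,1]}I(\gamma(t))\geq I(\gamma(t^{*}))\geq\alpha$. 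Taking the infimum over $\Gamma$ produces $c\geq\alpha>0$.

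Next I would turn $\Gamma$ into a complete metric space by means of the uniform distance $d(\gamma_{1},\gamma_{2})=\max_{t\in[0,1]}\left\|\gamma_{1}(t)-\gamma_{2}(t)\right\|$; completeness holds because $C([0,1],X)$ is complete ($X$ being a Banach space) and the endpoint constraints defining $\Gamma$ are preserved under uniform limits. On $(\Gamma,d)$ the functional $\Psi(\gamma)=\max_{t\in[0,1]}I(\gamma(t))$ is continuous, hence lower semicontinuous, and bounded below by $\alpha$ with $\inf_{\Gamma}\Psi=c$. Applying Ekeland's variational principle with $\varepsilon=1/n$ then furnishes, for each $n$, a path $\gamma_{n}\in\Gamma$ such that $\Psi(\gamma_{n})\leq c+\tfrac{1}{n}$ and
\[\Psi(\gamma)\geq\Psi(\gamma_{n})-\frac{1}{n}\,d(\gamma,\gamma_{n})\qquad\text{for all }\gamma\in\Gamma.\]

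The crux, and the step I expect to be the main obstacle, is to extract from each near--minimizing path $\gamma_{n}$ a point $x_{n}$ with $I(x_{n})\to c$ and $\left\|I'(x_{n})\right\|_{X^{*}}\to0$. Consider the peak set $P_{n}=\{\gamma_{n}(t):I(\gamma_{n}(t))=\Psi(\gamma_{n})\}$; since the endpoint values $I(\gamma_{n}(0))\leq c$ and $I(\gamma_{n}(1))=I(e)<0$ do not exceed the peak level $\Psi(\gamma_{n})\geq c$, the peaks may be taken interior. Arguing by contradiction, suppose $\left\|I'(x)\right\|_{X^{*}}\geq\sigma>0$ for every $x$ in a neighbourhood of $P_{n}$. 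Using a locally Lipschitz pseudo--gradient vector field for $I$ (needed because $X$ is merely a Banach space, so $I'$ admits no Riesz representation), together with a cut--off $\chi_{n}$ equal to $1$ where $I$ is within $\rho$ of $\Psi(\gamma_{n})$ and vanishing near $t=0,1$, I would construct an admissible competitor $\gamma=\gamma_{n}+s\,w_{n}$, with $w_{n}(0)=w_{n}(1)=0$, that lowers the maximum of $I$ along the path by at least a constant multiple of $\sigma s$ while keeping $d(\gamma,\gamma_{n})\leq s\max_{t}\|w_{n}(t)\|$. Feeding this into the Ekeland inequality forces $\sigma$ to be dominated by a quantity of order $1/n$, which is impossible for $n$ large. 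Hence for each large $n$ there is $x_{n}\in P_{n}$ with $\left\|I'(x_{n})\right\|_{X^{*}}\leq 1/n$; as $I(x_{n})=\Psi(\gamma_{n})\in[c,c+\tfrac{1}{n}]$, the sequence $(x_{n})$ is the desired $(PS)_{c}$--sequence.

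Finally, if $I$ satisfies the $(PS)_{c}$ condition, the sequence $(x_{n})$ admits a subsequence converging to some $u_{0}\in X$; the continuity of $I$ and of $I'$ (guaranteed by $I\in C^{1}(X,\mathbb{R})$) then gives $I(u_{0})=c$ and $I'(u_{0})=0$, so $c$ is a critical value. Essentially all the genuine work sits in the pseudo--gradient deformation of the previous paragraph; the remaining steps are routine verifications concerning Ekeland's principle, the lower semicontinuity of $\Psi$, and the passage to the limit.
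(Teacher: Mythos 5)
The paper contains no proof of this statement: Theorem \ref{T::2} is quoted from Brezis and Nirenberg \cite{BreNir} and used as a black box, so there is no internal argument to compare yours against. Your overall route --- metrizing $\Gamma$ with the uniform distance, applying Ekeland's variational principle to $\Psi(\gamma)=\max_{t\in[0,1]}I(\gamma(t))$, and extracting almost-critical points near the peak sets of almost-minimizing paths --- is the classical proof of this result, essentially the one in \cite{BreNir} itself, and your preliminary steps (the intermediate value argument giving $c\geq\alpha$, completeness of $(\Gamma,d)$, lower semicontinuity of $\Psi$, and the final upgrade under the $(PS)_c$ condition) are correct.

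The problem is that the step you yourself call the crux is not carried out, and as sketched it contains two genuine defects. First, the logic of the contradiction is off: $\sigma$ is introduced for a \emph{fixed} $n$ (a lower bound for $\left\|I'\right\|$ near the peak set $P_n$ of $\gamma_n$), so the conclusion ``$\sigma$ is dominated by a quantity of order $1/n$'' is not ``impossible for $n$ large''; it is simply a bound, from which one obtains, for each $n$, a point \emph{near} (not in) $P_n$ with $\left\|I'\right\|\leq C/n$. The construction that produces this bound --- pseudo-gradient field, cut-off, verification that the maximum of the competitor path really drops by a fixed multiple of $\sigma s$ and that its maximum is attained in the deformed region --- is the entire content of the theorem and is left in the conditional throughout. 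Second, and more seriously, your claim that ``the peaks may be taken interior'' is unjustified: $I(\gamma_n(0))\leq\Psi(\gamma_n)$ permits equality, and this is exactly where the statement \emph{as quoted in the paper} fails, because no hypothesis controls $I(0)$. Concretely, take $X=\mathbb{R}$, $I(x)=3-x$, $r=1$, $\alpha=1$, $e=4$: then $I(\pm 1)\geq 1$, $I(4)=-1<0$, every path from $0$ to $4$ covers $[0,4]$, so $c=I(0)=3\geq\alpha$; yet $I'\equiv -1$, so there is no $(PS)_c$-sequence and no critical point, and here the optimal path's peak sits at the fixed endpoint $t=0$, where admissible deformations cannot lower $\Psi$. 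A correct proof must invoke a hypothesis such as $I(0)<\alpha$ (in the original theorem of \cite{BreNir}, $I(0)=0$), which forces both endpoint values strictly below $c$ and hence each $P_n$ to be a compact subset of $(0,1)$; that hypothesis is implicitly available in this paper's application (there $J(0)=0$), but it appears neither in the quoted statement nor anywhere in your argument, so your proof as written cannot be completed.
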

		
	Let us consider the following norm in $\ell^q$:
	\[\left\| u\right\| _q:=\left( \dfrac{1}{q}\sum_{k\in\mathbb{Z}}V(k)\,\left| u(k)\right| ^q\right) ^{1/q}\,.\]
	
	From the assumption on $V$, we have that it is an equivalent norm to $\left|\cdot \right|_q$, since we have:
	\[\frac{V_0}{q}\left| u\right|_q ^q\leq \left\| u\right\|_q^q \leq \dfrac{V_1}{q}\left| u\right| _q^q\,.\] 	
	
	Now, we have the following result, which can be prove in the same way as \cite[Lemma 2.3]{CaLiTe}:
	\begin{lemma}\label{L::3}
		Suppose that assumptions $(F_1)-(F_3)$ are verified. Then, there exist $\rho>0$, $\alpha>0$ and $e\in \ell^q$ such that $\left\| e\right\| >\rho$ and
		\begin{enumerate}
			\item $J(u)\geq \alpha$ if $\left\| u\right\| =\rho$.
			\item $J(e)<0$.
		\end{enumerate}
	\end{lemma}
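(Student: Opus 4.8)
The plan is to verify the two mountain-pass geometry conditions separately, exploiting that the ``kinetic'' part of $\Phi$ is non-negative, that $(F_3)$ controls the nonlinearity near $0$, and that the Rabinowitz condition $(F_2)$ forces super-$p_i$ growth of $F$ at infinity.

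For the first condition I would start from the fact that $\phi_{p_2}(\cdot)\geq 0$, $\phi_{p_1}(\cdot)\geq 0$ and $a>0$, so that all the difference terms drop out of the lower bound and
\[\Phi(u)\;\geq\;\sum_{k\in\mathbb{Z}}V(k)\,\phi_q(u(k))\;=\;\left\| u\right\|_q^q\,.\]
Next I would translate $(F_3)$ into an integral estimate: given $\varepsilon>0$ there is $\delta>0$ with $\left| F(k,t)\right|\leq \frac{\varepsilon}{q}\left| t\right|^q$ whenever $\left| t\right|<\delta$, uniformly in $k$ by $T$-periodicity. Since $\left| u(k)\right|\leq \left| u\right|_q$ for every $k$, choosing $\rho$ small enough (so that $\left(q/V_0\right)^{1/q}\rho<\delta$) forces $\left| u(k)\right|<\delta$ for all $k$ on the sphere $\left\| u\right\|_q=\rho$; hence, using the norm equivalence $\left| u\right|_q^q\leq \frac{q}{V_0}\left\| u\right\|_q^q$,
\[\left| \lambda\sum_{k\in\mathbb{Z}}F(k,u(k))\right|\;\leq\;\frac{\left| \lambda\right|\varepsilon}{q}\left| u\right|_q^q\;\leq\;\frac{\left| \lambda\right|\varepsilon}{V_0}\left\| u\right\|_q^q\,.\]
Combining the two displays gives $J(u)\geq\bigl(1-\tfrac{\left| \lambda\right|\varepsilon}{V_0}\bigr)\rho^q$ on $\left\| u\right\|_q=\rho$, and fixing $\varepsilon<V_0/\left| \lambda\right|$ makes the right-hand side a strictly positive constant $\alpha$.

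For the second condition the key input is $(F_2)$. I would integrate the differential inequality $\mu F(k,t)\leq t\,f(k,t)$ over $[s,t]$ to obtain $F(k,t)\geq F(k,s)\,(t/s)^\mu$ for $t\geq s$; since $F(\cdot,s)$ is $T$-periodic and strictly positive, its minimum $c_0>0$ over one period yields $F(k,t)\geq \frac{c_0}{s^\mu}\,t^\mu$ for all $t\geq s$ and all $k$. Then I fix any non-negative finitely supported $w\in\ell^q$ with $w\not\equiv 0$ and test with $e=\tau w$, $\tau>0$. By homogeneity of $\phi_p$ and linearity of $\Delta$,
\[\Phi(\tau w)\;=\;\tau^{p_2}A_2+a\,\tau^{p_1}A_1+\tau^{q}A_q\;\leq\; C\,\tau^{\max(p_1,p_2)}\quad(\tau\geq 1)\,,\]
with non-negative finite constants $A_2,A_1,A_q$ and $C=A_2+aA_1+A_q$; meanwhile, once $\tau$ is large enough that $\tau w(k)\geq s$ on the finite support of $w$, the growth bound gives $\sum_{k}F(k,\tau w(k))\geq C''\tau^\mu$ with $C''=\frac{c_0}{s^\mu}\sum_{w(k)>0}w(k)^\mu>0$.

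Because $\mu>p_i\geq q$, we have $\mu>\max(p_1,p_2)$, so for $\lambda>0$ the negative term dominates and
\[J(\tau w)\;\leq\; C\,\tau^{\max(p_1,p_2)}-\lambda\,C''\,\tau^{\mu}\;\xrightarrow[\ \tau\to+\infty\ ]{}\;-\infty\,.\]
Choosing $\tau$ large enough that simultaneously $J(\tau w)<0$ and $\left\| \tau w\right\|_q=\tau\left\| w\right\|_q>\rho$, and setting $e=\tau w$, completes the verification. I expect the main obstacle to be precisely this second condition: one must derive the polynomial lower bound on $F$ from the Rabinowitz inequality correctly and confirm that its super-$p_i$ growth genuinely beats the order-$p_i$ growth of the $\phi_{p_i}$ terms in $\Phi$ — the strict inequality $\mu>p_i$ is exactly what makes the scaling argument succeed.
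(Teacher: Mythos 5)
Your proposal is correct and takes essentially the same route as the paper, which does not write out a proof of Lemma \ref{L::3} but instead defers to \cite[Lemma 2.3]{CaLiTe}: that proof is precisely your two-step verification, using $(F_3)$ together with the bound $\Phi(u)\geq\left\| u\right\|_q^q$ to get $J\geq\alpha$ on a small sphere, and integrating the Rabinowitz inequality $(F_2)$ to obtain $F(k,t)\geq F(k,s)\left( t/s\right)^{\mu}$ and hence $J(\tau w)\rightarrow-\infty$ along a ray through a finitely supported nonnegative $w$, since $\mu>\max(p_1,p_2)\geq q$. No gaps.
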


	We obtain the following result.
	
	\begin{lemma}\label{L::4}
		Assume that $(F_1)-(F_3)$ are verified. Then, there exists $c>0$ and a $\ell^q-$bounded $(PS)_c$ sequence for $J$.
	\end{lemma}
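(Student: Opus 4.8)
The plan is to feed the mountain-pass geometry furnished by Lemma \ref{L::3} into the Brezis--Nirenberg theorem (Theorem \ref{T::2}) to obtain the sequence, and then to extract $\ell^q$-boundedness from the Rabinowitz-type condition $(F_2)$. First I would apply Theorem \ref{T::2} with $X=\ell^q$, $I=J$, $r=\rho$ and the point $e$ produced by Lemma \ref{L::3}: conditions (1) and (2) of the theorem are precisely the two conclusions of that lemma. The minimax value $c=\inf_{\gamma\in\Gamma}\max_{t\in[0,1]}J(\gamma(t))$ then satisfies $c\geq\alpha>0$, which already yields the required $c>0$, and the first conclusion of Theorem \ref{T::2} produces a sequence $(u_n)\subset\ell^q$ with $J(u_n)\to c$ and $J'(u_n)\to 0$.

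The heart of the matter is boundedness. Here I would compute the combination $J(u_n)-\frac{1}{\mu}\langle J'(u_n),u_n\rangle$, using the formula for $J'$ from Lemma \ref{L::2} together with the elementary identity $\varphi_p(t)\,t=\left| t\right|^p=p\,\phi_p(t)$. This collapses the difference into
\begin{align*}
J(u_n)-\frac{1}{\mu}\langle J'(u_n),u_n\rangle &=\Bigl(1-\frac{p_2}{\mu}\Bigr)\sum_{k\in\mathbb{Z}}\phi_{p_2}\bigl(\Delta^2u_n(k-2)\bigr)+a\Bigl(1-\frac{p_1}{\mu}\Bigr)\sum_{k\in\mathbb{Z}}\phi_{p_1}\bigl(\Delta u_n(k-1)\bigr)\\
&\quad+\Bigl(1-\frac{q}{\mu}\Bigr)\sum_{k\in\mathbb{Z}}V(k)\,\phi_q(u_n(k))+\frac{\lambda}{\mu}\sum_{k\in\mathbb{Z}}\bigl(f(k,u_n(k))\,u_n(k)-\mu\,F(k,u_n(k))\bigr).
\end{align*}
Since $\mu>p_i\geq q$, all three bracketed coefficients are strictly positive, and the first inequality of $(F_2)$ makes every summand of the last sum nonnegative (recall $\lambda>0$). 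Discarding the nonnegative second- and first-order difference terms and the $F$-term leaves the one-sided bound $\bigl(1-\frac{q}{\mu}\bigr)\left\|u_n\right\|_q^q\leq J(u_n)-\frac{1}{\mu}\langle J'(u_n),u_n\rangle$, where I have used that $\left\|u_n\right\|_q^q=\sum_{k\in\mathbb{Z}}V(k)\,\phi_q(u_n(k))$.

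To close, I would bound the right-hand side from above. The sequence $J(u_n)$ converges, hence is bounded; and $\left| \langle J'(u_n),u_n\rangle\right| \leq\left\|J'(u_n)\right\|_\ast\left\|u_n\right\|_q\leq\left\|u_n\right\|_q$ for $n$ large, since $J'(u_n)\to 0$ in the dual norm. This produces an inequality of the form $\bigl(1-\frac{q}{\mu}\bigr)\left\|u_n\right\|_q^q\leq C+C'\left\|u_n\right\|_q$. Because $q>1$, the left-hand side grows super-linearly in $\left\|u_n\right\|_q$, so the inequality can persist only if $(\left\|u_n\right\|_q)$ remains bounded; this is the desired conclusion.

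The main obstacle is that the three coercive exponents $p_1,p_2,q$ are in general distinct, so the superlinearity condition $(F_2)$ must dominate the largest of them; this is exactly why the hypothesis $\mu>p_i\geq q$ is imposed, and it is what guarantees that all three coefficients above stay positive. The simplification that makes the estimate painless is that only the $\ell^q$-norm needs to be controlled, and it is captured entirely by the $V$-weighted term, so the (nonnegative) higher-order difference contributions may simply be thrown away rather than estimated. One should also note in passing that the pairing $\langle J'(u_n),u_n\rangle$ is finite and given by the expression of Lemma \ref{L::2} precisely because $u_n\in\ell^q$.
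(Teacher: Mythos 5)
Your proof is correct and follows essentially the same route as the paper: the mountain-pass geometry of Lemma \ref{L::3} fed into Theorem \ref{T::2} yields the $(PS)_c$ sequence with $c\geq\alpha>0$, and your combination $J(u_n)-\frac{1}{\mu}\left\langle J'(u_n),u_n\right\rangle$ is exactly the paper's quantity $\mu J(u_m)-\left\langle J'(u_m),u_m\right\rangle$ rescaled by $1/\mu$, exploited in the same way via $(F_2)$ and positivity of the coefficients. The only difference is that you make explicit the closing step (boundedness of $(J(u_n))$, the dual-norm bound on the pairing, and the superlinearity argument from $q>1$), which the paper leaves implicit after its final inequality.
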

	\begin{proof}
		From Lemma \ref{L::3} and Theorem \ref{T::2} there exists $(u_m)\subset \ell^q$ a $(PS)_c-$sequence for $J$, i.e, \eqref{Ec::PSc} is verified for $I=J$, where 
		\[c=\inf_{\gamma\in \Gamma}\left\lbrace \max_{t\in[0,1]}J(\gamma(t))\right\rbrace \,,\quad \Gamma=\left\lbrace \gamma\in C([0,1],\ell^q)\,\mid\,\gamma(0)=0\,,\ \gamma(1)=e\right\rbrace \,,\]
		where $e\in\ell^q$ has been introduced in Lemma \ref{L::3}.
		
		Now, we have to prove that the sequence $(u_m)$ is bounded in $\ell^q$. We have,		
		\begin{eqnarray}
		\nonumber\left\langle J'(u_m),u_m\right\rangle &=& \sum_{k\in\mathbb{Z}}\left( \left| \Delta^2\,u_m(k-2)\right|^{p_2}+a\,\left| \Delta\,u_m(k-1)\right| ^{p_1}+V(k)\left| u_m(k)\right| ^q\right) \\\nonumber &&-\sum_{k\in\mathbb{Z}}\lambda \,f(k,u_m(k))\,u_m(k)\,,
		\end{eqnarray}
		Now, using $(F_2)$ and taking into account that $\mu>p_i\geq q>1$ for $i=1,2$, we have:
		\begin{eqnarray}
		\nonumber
		\mu J(u_m)-\left\langle J'(u_m),u_m\right\rangle &=&\left( \dfrac{\mu}{p_2}-1\right) \sum_{k\in\mathbb{Z}} \left| \Delta^2\,u_m(k-2)\right|^{p_2}\\\nonumber&&+a\,\left( \dfrac{\mu}{p_1}-1\right) \sum_{k\in\mathbb{Z}}\left| \Delta\,u_m(k-1)\right| ^{p_1}\\\nonumber &&+\left( \dfrac{\mu}{q}-1\right) \sum_{k\in\mathbb{Z}}V(k)\left| u_m(k)\right| ^q\\\nonumber&&+\lambda  \sum_{k\in\mathbb{Z}}\left( f(k,u_m(k)) u_m(k)-\mu F(k,u_m(k))\right) \\\nonumber
		&\geq& \left( \dfrac{\mu}{q}-1\right) q\left\| u_m\right\| _q^q=\left( \mu-q\right) \left\| u_m\right\| _q^q\,.
		\end{eqnarray}
	Thus, $(u_m)$ is a bounded sequence in $\ell^q$.
	\end{proof}
	
	\begin{remark}
		In order to prove the previous result for problem \eqref{P::h2},we only have to take into account that $\dfrac{\mu}{p_i}-1>0$, for all $i=1,\dots,n$.
	\end{remark}

	Now, we can prove the main result of this part:
	
	\begin{theorem}\label{T::h1}
		Suppose that $a>0$, the function $V\colon \mathbb{Z}\rightarrow \mathbb{R}$ is positive and $T-$periodic and assumptions $(F_1)-(F_3)$ are fulfilled. Then, for $\lambda>0$, problem \eqref{P::h1} has a non trivial homoclinic solution $u\in \ell^q$, which is a critical point of the functional $J\colon \ell^q\rightarrow \mathbb{R}$.
		\end{theorem}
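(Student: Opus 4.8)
The plan is to produce a nontrivial critical point of $J$ at the mountain-pass level $c>0$. By Lemma \ref{L::4} we already have a bounded $(PS)_c$-sequence $(u_m)\subset\ell^q$ with $c>0$; since $\ell^q$ is reflexive for $q>1$, after passing to a subsequence we may assume $u_m\rightharpoonup u$ weakly, and in a sequence space weak convergence forces coordinate-wise convergence $u_m(k)\to u(k)$ for every $k$. The difficulty is that $J$ does \emph{not} satisfy the global $(PS)_c$ condition: because $V$ and $f(\cdot,t)$ are $T$-periodic, both $J$ and $J'$ are invariant under the shift $(\tau_j u)(k)=u(k+Tj)$, so a $(PS)_c$-sequence may lose its mass at infinity and the weak limit $u$ could vanish. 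The whole point is therefore to recover a nontrivial limit by exploiting this very periodicity.

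First I would establish a non-vanishing estimate: there exist $\beta>0$ and a subsequence such that $\|u_m\|_\infty=\sup_{k}|u_m(k)|\geq\beta$. Arguing by contradiction, if $\|u_m\|_\infty\to0$ then assumption $(F_3)$ gives $|f(k,u_m(k))\,u_m(k)|\leq\varepsilon\,|u_m(k)|^q$ for $m$ large, whence $\lambda\big|\sum_k f(k,u_m(k))u_m(k)\big|\leq (\lambda\varepsilon q/V_0)\,\|u_m\|_q^q$ by the norm equivalence. Feeding this into the expansion of $\langle J'(u_m),u_m\rangle$ computed in Lemma \ref{L::4}, and using $\langle J'(u_m),u_m\rangle\to0$ together with the lower bound $\sum_k V(k)|u_m(k)|^q=q\|u_m\|_q^q$ coming from the potential term, one gets $q(1-\lambda\varepsilon/V_0)\|u_m\|_q^q\to0$; choosing $\varepsilon<V_0/\lambda$ yields $u_m\to0$ in $\ell^q$ and hence $J(u_m)\to J(0)=0$, contradicting $J(u_m)\to c>0$.

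Next I would translate. For each $m$ pick $k_m$ with $|u_m(k_m)|\geq\beta$ and write $k_m=Tj_m+r_m$ with $r_m\in\{0,\dots,T-1\}$; passing to a subsequence we may take $r_m\equiv r$. Set $v_m:=\tau_{j_m}u_m$. By the $T$-periodicity of $V$ and $f$ and the translation covariance of the difference operators, the shift leaves $J$ and $J'$ invariant, so $(v_m)$ is again a bounded $(PS)_c$-sequence, while $|v_m(r)|=|u_m(k_m)|\geq\beta$. Extracting a weakly convergent subsequence $v_m\rightharpoonup u$, coordinate-wise convergence gives $|u(r)|\geq\beta$, so $u\neq0$. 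To see that $u$ is a critical point, test against an arbitrary finitely supported $\psi\in\ell^q$: the pairing $\langle J'(v_m),\psi\rangle$ involves only finitely many coordinates of $v_m$ and their finite differences, so by coordinate-wise convergence and the continuity of $\varphi_{p_1},\varphi_{p_2},\varphi_q$ and $f(k,\cdot)$ one may pass to the limit to obtain $\langle J'(v_m),\psi\rangle\to\langle J'(u),\psi\rangle$. Since $J'(v_m)\to0$, this yields $\langle J'(u),\psi\rangle=0$, and by density of finitely supported sequences in $\ell^q$ together with continuity of $J'(u)$ I conclude $J'(u)=0$.

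Finally, $u$ is a nontrivial critical point of $J$, hence by Lemma \ref{L::2} a solution of \eqref{P::h1}; and since $u\in\ell^q$ its terms satisfy $|u(k)|\to0$ as $|k|\to\infty$, so the solution is homoclinic. I expect the non-vanishing estimate and, above all, the translation argument to be the crux: this is precisely the step that circumvents the failure of the global Palais--Smale condition caused by the translation invariance of the periodic problem, and it is what forces the weak limit to be nontrivial.
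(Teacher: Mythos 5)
Your proposal is correct and follows essentially the same strategy as the paper: translation of the bounded $(PS)_c$-sequence from Lemma \ref{L::4} by multiples of $T$ to exploit the periodicity of $V$ and $f$, extraction of a weak limit identified coordinatewise as a critical point, and an $(F_3)$-based estimate showing that vanishing of the sup-norms would force the mountain-pass level $c>0$ down to $0$. The only differences are organizational: you rule out vanishing \emph{before} translating, by centering a coordinate of size at least $\beta$ in $[0,T-1]$ so the weak limit is nonzero outright, whereas the paper translates to center the maximum first and then derives the same contradiction from assuming the weak limit is zero; you also spell out the critical-point identification via finitely supported test sequences, a step the paper delegates to \cite{CaLiTe}.
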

	
	\begin{proof}
		From Lemma \ref{L::4}, we have that the obtained $(PS)_c$ sequence $(u_m)$ is bounded in $\ell^q$, so, $\lim_{\left|k\right| \rightarrow+\infty}\left| u_m(k)\right| =0$. Hence, $\left| u_m(k)\right|$ attains its maximum at a value $k_m\in\mathbb{Z}$.
		
		Let us denote as $j_m$, the unique integer such that $j_m\,T\leq k_m<(j_m+1)\,T$ and let us define the function
		\[w_m(k):=u_m(k+j_m\,T)\,,\]
		this function attains its maximum at $i_m=k_m-j_mT\in[0,T-1]$.

		Taking into account the periodicity of $V$ and $(F_1)$, we have: 
		\[\left\| w_m\right\| _q=\left\| u_m\right\| _q\,,\quad \text{and } J(w_m)=J(u_m)\,.\]
		
		Since $(u_m)$ is bounded in $\ell^q$ it also is $(w_m)$. Then, there exists $w\in \ell^q$ such that $w_m\rightarrow w$ weakly, i.e., $\left\langle w_m,v\right\rangle \rightarrow \left\langle w,v\right\rangle $ for all $v\in\ell^q$.
		
The proof that $w$ is a critical point of the functional $J$ is analogous to  the proof of \cite[Theorem 1.1]{CaLiTe}. Thus, $w$ is a solution of \eqref{P::h1}.

		To finish the proof, we have to see that $w\neq 0$.
		
		Let us assume that $w=0$. In such a case, we have
		\begin{equation}\label{Ec::lim0}\lim_{m\rightarrow+\infty}\left| u_m\right|_{\infty}=\lim_{m\rightarrow+\infty}\left| w_m\right|_{\infty}=\lim_{m\rightarrow+\infty}\max\{\left| w_m\right|\,\mid k\in\mathbb{Z}\}=0\,.\end{equation}
		
		Using $(F_3)$, it is known that for every $\varepsilon>0$, there exists $\delta>0$, such that if $\left| x\right| <\delta$ then, for every $k\in[0,T-1]$, the following inequalities are fulfilled:
		\begin{equation}
		\label{Ec::Ff}
		\left| F(k,x)\right| \leq \varepsilon \left| x\right| ^q\quad \text{and}\quad \left| f(k,x)\,x\right| \leq \varepsilon\,\left| x\right| ^q\,.
		\end{equation}
		
		From \eqref{Ec::lim0}, for every $k\in[0,T-1]$, we can ensure the existence of a positive integer $M_k$, such that if $m>M_k$, then $\left| w_m(k)\right| <\delta$. By the construction of $w_m$, the maximum value of $\left| w_m\right| $ is attained at $i_m\in[0,T-1]$, we conclude that for $m>M=\max\{M_k\ \mid\ k\in[0,T-1]\}$ we have:
		\[\left| w_m(k)\right| \leq\left| w_m(i_m)\right| <\delta\,.\]
		
		Hence, from \eqref{Ec::Ff}, for all $m>M$ we have
		\[\left| F(k,w_m(k))\right| \leq \varepsilon \left| w_m(k)\right| ^q\ \text{and } \left| f(k,w_m(k))\,w_m(k)\right| \leq \varepsilon\,\left| w_m(k)\right| ^q\,,\forall k\in \mathbb{Z}\,.\]
		
		This implies that
		\begin{eqnarray}
		\nonumber
		0\leq qJ(w_m)&=&\dfrac{q}{p_2}\sum_{k\in\mathbb{Z}}\left| \Delta^2w_m(k-2)\right| ^{p_2}+\dfrac{a\,q}{p_1}\sum_{k\in\mathbb{Z}}\left| \Delta w_m(k-1)\right| ^{p_1}\\\nonumber
		&&+\sum_{k\in\mathbb{Z}}V(k)\left| w_m(k)\right| ^q-\lambda q\sum_{k\in\mathbb{Z}}F(k,w_m(k))\,,\\\nonumber
		&\leq& \sum_{k\in\mathbb{Z}}\left( \left| \Delta^2w_m(k-2)\right| ^{p_1}+a\left| \Delta w_m(k-1)\right| ^{p_2}+V(k)\left| w_m(k)\right| ^q\right) \\\nonumber
		&&-\lambda\sum_{k\in\mathbb{Z}}f(k,w_m(k))w_m(k)\\\nonumber &&-\lambda\sum_{k\in\mathbb{Z}}\left( qF(k,w_m(k))-f(k,w_m(k))w_m(k)\right) \,,\\\nonumber
		&\leq &\left\langle J'(w_m),w_m\right\rangle +\lambda\,\sum_{k\in\mathbb{Z}}\left( q\varepsilon\left| w_m(k)\right| ^q+\varepsilon\left| w_m(k)\right| ^q\right) \,,\\\nonumber
		&=&\left\langle J'(w_m),w_m\right\rangle+\lambda\,\varepsilon (q+1)\,\left| w_m\right| _q^q\,,\\\nonumber&\leq& \left\| J'(w_m)\right\| \,\left\| w_m\right\| +\lambda\,\varepsilon \dfrac{q+1}{V_0}\left\| w_m\right\| _q^q\,.
		\end{eqnarray}
		Since $(w_m)\in \ell^q$ is bounded, $\lim_{m\rightarrow+\infty}J'(w_m)=0$ and we can choose $\varepsilon>0$ arbitrarily small, we arrive to a contradiction with the fact that $\lim_{m\rightarrow +\infty}(w_m)=\lim_{m\rightarrow +\infty}(u_m)=c>0$.
	\end{proof}
	
	\begin{remark}
		Taking into account that for every $i=1,\dots,n$, we have that $\dfrac{q}{p_i}\leq 1$, we can prove similarly Theorem \ref{T::h1} for problem \eqref{P::h2}, if we consider $a_i\geq 0$ for all $i=1,\dots,n-1$.
	\end{remark}
	\begin{exemplo}
		Let $r>p_i\geq q>1$ for $i=1,2$ and $b\colon \mathbb{Z}\rightarrow \mathbb{R}$ a positive $T-$periodic function.
		
		Consider $f(k,t)=b(k)\varphi_r(t)$. Let us verify that  such $f$ satisfies $(F_1)-(F_3)$.
		
		\begin{itemize}
			\item[$(F_1)$] Obviously $f$ is continuous as a function of $t$ and $T-$periodic as a function of $k$,
			\item[$(F_2)$] $F(k,t)=b(k)\,\Phi_r(t)$. 
			There exist $r\in \mathbb{R}$, such that $r> p_i\geq q>1$, for $i=1,2$  such that
			\begin{eqnarray}
			\nonumber
			r\,F(k,t)=b(k)\,|t|^r=t\,b(k)\,t\,|t|^{r-2}=t\,f(k,t)\,, && \forall k\in\mathbb{Z}\,,\  t\neq0\,,\\\nonumber
			F(k,t)>0\,,&& \forall k\in\mathbb{Z}\,,\ \forall t>0\,.
			\end{eqnarray}
			\item[$(F_3)$] Since $r>q$, we have:
\[\lim_{|t|\rightarrow 0}\dfrac{f(k,t)}{|t|^{q-1}}=\lim_{|t|\rightarrow 0} b(k)\,t\,|t|^{r-q-1}=0\,.\]
		\end{itemize}
		
		Then, for $V$, $b\colon \mathbb{Z}\rightarrow \mathbb{R}$ two positive $T-$periodic functions the problem
		\begin{eqnarray}
		\nonumber
		\Delta^2\left( \varphi_{p_2}\left( \Delta^2u(k-2)\right) \right) -a\,\Delta\left( \varphi_{p_1}\left( \Delta u(k-1)\right) \right) \\\nonumber +V(k)\,\varphi_q\left( u(k)\right) &=&\lambda \,b(k)\,\varphi_r\left( u(k)\right)\,,\\\nonumber
		\lim_{\left| k\right| \rightarrow +\infty}\left| u(k)\right| &=&0\,,
		\end{eqnarray}
		where $r>p_i\geq q>1$ for $i=1,2$, has a non trivial homoclinic solution for every $\lambda>0$.
	\end{exemplo}
	\section{Boundary value problems}
	In this part we are going to study the existence of multiple solutions for the following boundary value problem:	
	 \begin{eqnarray}
	 	\label{P::d1}
	 	\Delta^2\left( \varphi_{q}\left( \Delta^2u(k-2)\right) \right) -a\Delta\left( \varphi_{q}\left( \Delta u(k-1)\right) \right) \\\nonumber +V(k)\varphi_q\left( u(k)\right) -\lambda f(k,u(k))&=&0,\,k\in[1,T]\\\label{P::cf}
	 u(0)=u(T+1)=\Delta u(-1)=\Delta u(T) &=&0\,,
	 	\end{eqnarray}
	where $T$ is a fixed positive integer, $[1,T]=\{1,2,\dots,T\}$, the difference operators have been introduced in \eqref{Ec::D1}-\eqref{Ec::D2} and $\varphi_q$ has been defined in $\eqref{Ec::Phi}$ for $1<q<+\infty$.
	
	Moreover, we consider $V\colon [1,T]\rightarrow \mathbb{R}$, as a positive function. We can consider it as a restriction to the discrete interval $[1,T]$ of the $T-$periodic function $V$ introduced in the first part of the paper.
	
	 We also denote $V_0=\min\{V(1),\dots,V(T)\}$ and $V_1=\max\{V(1),\dots,V(T)\}$.
	
	Finally, let $f\colon [1,T]\times \mathbb{R}\rightarrow \mathbb{R}$ be a continuous function.
	
	Let $n<T/2$ and $a_i>0$ for $i=1,\dots,n-1$. We are going to indicate how our arguments must be modified in order to obtain the existence of multiple solutions for the following problem:	
	\begin{eqnarray}
	\label{P::d2}
		\Delta^n\left( \varphi_{p_n}\left( \Delta^nu(k-2)\right) \right) +\sum_{i=1}^{n-1}(-1)^ia_i\Delta^{n-i}\left( \varphi_{p_{n-i}}\left( \Delta^{n-i} u(k-1)\right) \right)&& \\\nonumber
		+(-1)^nV(k)\,\varphi_q\left( u(k)\right) +(-1)^{n+1}\lambda f(k,u(k))&=0\,,\\\label{P::cf1} u(0)=\Delta u(-1)=\Delta^2u(-2)=\cdots =\Delta^{n-1}u(1-n) &=0\,,\\\label{P::cf2}
		u(T+1)=\Delta u(T+1)=\Delta^2 u(T+1)=\cdots =\Delta^{n-1}u(T+1)&=0\,.
		\end{eqnarray}
	
	As in the first part, we are going to obtain the existence result by means of variational methods.
	
	In order to obtain our variational approach, we consider the following $T-$dimensional Banach space:	
 \begin{equation}\label{Ec::esp} 
	X=\left\lbrace u\colon [-1,T+2]\rightarrow \mathbb{R}\ \mid\ u(0)=u(T+1)=\Delta u(-1)=\Delta u (T+1)=0\right\rbrace\,, \end{equation}
coupled with the following norm
\[\left\| u\right\| _X=\left( \sum_{k=1}^{T+2}\left| \Delta^2(k-2)\right| ^q+a\sum_{k=1}^{T+1}\left| \Delta u(k-1)\right| ^q+\sum_{k=1}^TV(k)\,\left| u(k)\right| ^q\right) ^{1/q}\,.\]

\begin{remark}
In order to study problem \eqref{P::d2}--\eqref{P::cf2}, we consider the following Banach space
		\begin{eqnarray}
			\nonumber X_n&=&\left\lbrace u\colon [1-n,T+n]\rightarrow \mathbb{R}\ \mid\ u(0)=u(T+1)=\cdots=\Delta^{n-1} u(1-n)\right. \\\nonumber &&\left. =\Delta^{n-1} u (T+n)=0\right\rbrace\,, \end{eqnarray}
		coupled with the following norm
				\[\left\| u\right\| _{X_n}=\left( \sum_{k=1}^{T+n}\left| \Delta^n(k-n)\right| ^q+\sum_{i=1}^{n-1}a_i\sum_{k=1}^{T+n-i}\left| \Delta^{n-i} u(k-(n-i))\right| ^q+\sum_{k=1}^TV(k)\,\left| u(k)\right| ^q\right) ^{1/q}\,.\]	
\end{remark}

We have the following result, in terms of the norm $\left\| \cdot\right\|_X $.

\begin{lemma}\label{L::1d}
	For every $u\in X$, the following inequality holds:
	\[\max_{k\in[1,T]}\left| u(k)\right| \leq \rho \left\| u\right\| _X\,,\]
	where
	\begin{equation}\label{Ec::rho}\rho=\dfrac{(T+1)\,(T+2)^{\frac{q-1}{q}}}{\left( 4^q+2^q\,a\,(T+1)\,(T+2)^{q-1}+V_0\,(T+1)^q\,(T+2)^{q-1}\right) ^{1/q}}\,.\end{equation}
\end{lemma}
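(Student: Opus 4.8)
The plan is to prove the equivalent lower bound $\|u\|_X^q\geq\rho^{-q}\,\max_{k\in[1,T]}|u(k)|^q$, and the starting observation is a purely algebraic one: clearing the denominator in \eqref{Ec::rho} gives
\[\frac{1}{\rho^q}=\frac{4^q}{(T+1)^q(T+2)^{q-1}}+\frac{2^q\,a}{(T+1)^{q-1}}+V_0\,.\]
Since the right-hand side is a sum of three constants, one attached to each of the three blocks of $\|u\|_X^q$, it suffices to bound each block from below by the corresponding constant times $M^q$, where I write $M=\max_{k\in[1,T]}|u(k)|=|u(k_0)|$ for some $k_0\in[1,T]$, and then add the three estimates. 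The block $\sum_{k=1}^T V(k)|u(k)|^q$ is immediate: retaining only the term $k=k_0$ gives $\sum_{k=1}^T V(k)|u(k)|^q\geq V_0\,|u(k_0)|^q=V_0\,M^q$.

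For the first-difference block I would exploit both conditions $u(0)=u(T+1)=0$. Telescoping $\Delta u$ from the left gives $u(k_0)=\sum_{j=1}^{k_0}\Delta u(j-1)$, and from the right $u(k_0)=-\sum_{j=k_0+1}^{T+1}\Delta u(j-1)$; passing to absolute values and adding the two yields $2M\leq\sum_{j=1}^{T+1}|\Delta u(j-1)|$. Applying the discrete H\"older inequality to this $(T+1)$-term sum and raising to the power $q$ gives $\sum_{j=1}^{T+1}|\Delta u(j-1)|^q\geq 2^q\,M^q/(T+1)^{q-1}$, which after multiplication by $a$ is exactly the middle term of $\rho^{-q}$.

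For the second-difference block I would iterate the same two-sided telescoping one level lower, now consuming $\Delta u(-1)=\Delta u(T+1)=0$. For each $m\in[0,T]$ the representations $\Delta u(m)=\sum_{j=-1}^{m-1}\Delta^2 u(j)$ and $\Delta u(m)=-\sum_{j=m}^{T}\Delta^2 u(j)$ add to give $|\Delta u(m)|\leq\tfrac12\sum_{j=-1}^{T}|\Delta^2 u(j)|=\tfrac12\sum_{k=1}^{T+2}|\Delta^2 u(k-2)|$. Substituting this into the bound $2M\leq\sum_{m=0}^{T}|\Delta u(m)|$ from the previous step produces $4M\leq(T+1)\sum_{k=1}^{T+2}|\Delta^2 u(k-2)|$; one last H\"older estimate on the $(T+2)$-term sum, raised to the power $q$, gives $\sum_{k=1}^{T+2}|\Delta^2 u(k-2)|^q\geq 4^q\,M^q/\big((T+1)^q(T+2)^{q-1}\big)$. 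Adding the three displayed lower bounds reconstructs $\rho^{-q}M^q$ and completes the argument.

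The steps are individually routine; the one genuine idea is the two-sided telescoping, used twice, which is precisely what generates the factors $2^q$ and $4^q$ appearing in $\rho$. The only care needed is bookkeeping: verifying that the telescoping identities remain valid up to and including the endpoints $m=0$ and $m=T$ (this is exactly where $\Delta u(-1)=\Delta u(T+1)=0$ are used), and keeping the index ranges of the three sums aligned with those in the definition of $\|u\|_X$.
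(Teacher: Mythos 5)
Your proof is correct and follows essentially the same route as the paper: two-sided telescoping (using $u(0)=u(T+1)=0$ and then $\Delta u(-1)=\Delta u(T+1)=0$) to get the factors $2$ and $4$, the discrete H\"older inequality on each difference block, and addition of the three lower bounds against the decomposition of $\rho^{-q}$. Incidentally, your middle term $2^q a/(T+1)^{q-1}$ is the correct one; the paper's final display contains a typo, $(T-1)^{q-1}$, which is inconsistent with its own definition of $\rho$.
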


\begin{proof}
	First, we have:
	
	\begin{equation}\label{Ec::n1}\max_{k\in[1,T]}\left| u(k)\right| ^q\leq \sum_{k=1}^T\left| u(k)\right| ^q\leq\dfrac{1}{V_0} \sum_{k=1}^TV(k)\,\left| u(k)\right| ^q\,.\end{equation}
	
	Secondly, taking into account the boundary conditions \eqref{P::cf}, for every $u\in X$ and all $j=1,\dots,T$, we have	
	\begin{eqnarray}
	\nonumber
	\sum_{k=1}^{T+1}\left| \Delta u(k-1)\right| &=&\sum_{k=1}^j\left| u(k)-u(k-1)\right| +\sum_{k=j+1}^{T+1}\left| u(k-1)-u(k)\right|\\\nonumber &\geq& \sum_{k=1}^j\left( \left| u(k)\right| -\left| u(k-1)\right|\right)  +\sum_{k=j+1}^{T+1}\left( \left| u(k-1)\right| -\left| u(k)\right|\right)\\\label{Ec::n2} &=&2\left| u(j)\right| -\left| u(0)\right| -\left| u(T+1)\right| =2\left| u(j)\right| \,. 
	\end{eqnarray}
	
	Analogously, for every $u\in X$ and all $j=1,\dots,T$, we obtain
	
	\begin{equation}\label{Ec::n3}
	\sum_{k=1}^{T+2}\left| \Delta^2u(k-2)\right| \geq 2\left| \Delta u(j-1)\right| \,.
	\end{equation}
	
	Now, combining \eqref{Ec::n2} with the discrete H\"{o}lder inequality, we have
{\small 	\begin{equation}\label{Ec::n4}
	\max_{k\in[1,T]}\left| u(k)\right| \leq \dfrac{1}{2}\sum_{k=1}^{T+1}\left| \Delta u(k-1)\right| \leq \dfrac{1}{2}(T+1)^{\dfrac{q-1}{q}}\left( \sum_{k=1}^{T+1}\left| \Delta u(k-1)\right| ^q\right) ^{1/q}\,.\end{equation}}
	
	Using \eqref{Ec::n2} and \eqref{Ec::n3}, we have:
	{\small 	\begin{eqnarray}\nonumber
		\max_{k\in[1,T]}\left| u(k)\right|& \leq& \dfrac{1}{2}\sum_{k=1}^{T+1}\left| \Delta u(k-1)\right| \leq \dfrac{1}{2}(T+1)\max_{k\in[1,T+1]}\left| \Delta u(k-1)\right| ^q\\\label{Ec::n5}\\\nonumber
	&\leq& \dfrac{T+1}{4}\sum_{k=1}^{T+2}\left| \Delta^2u(k-2)\right| \leq \dfrac{(T+1)(T+2)^{\frac{q-1}{q}}}{4}\left( \sum_{k=1}^{T+2}\left| \Delta^2 u(k-2)\right| ^q\right) ^{1/q}	\,.\end{eqnarray}}
	
	Thus, from \eqref{Ec::n1}, \eqref{Ec::n4} and \eqref{Ec::n5}, we obtain
	\begin{equation}
	\left(\dfrac{4^q}{(T+1)^q(T+2)^{q-1}}+ \dfrac{a\,2^q}{(T-1)^{q-1}}+V_0\right) \max_{k\in[1,T]}\left| u(k)\right|^q \leq \left\| u\right\| _X^q\,,
	\end{equation}
	and the result is proved, taking into account that \[\dfrac{4^q}{(T+1)^q(T+2)^{q-1}}+ \dfrac{a\,2^q}{(T-1)^{q-1}}+V_0=\dfrac{1}{\rho^q}\,.\]
\end{proof}

\begin{remark}
	Using induction arguments and the discrete H\"{o}lder inequality, it can be proved that for each $i=1,\dots,n$
	\[\sum_{k=1}^{T+i}\left|\Delta^iu(k-i)\right| ^q\geq \dfrac{2^{q\,i}}{(T+i)^{q-1}\prod_{j=1}^{i-1}(T+j)^q}\max_{k\in[1,T]}\left| u(k)\right| ^q\,.\]
	
	Thus, Lemma \ref{L::1d} remains true for problem \eqref{P::d2}--\eqref{P::cf2} with
	\[\rho=\dfrac{(T+n)^{\frac{q-1}{q}}\prod_{j=1}^{n-1}(T+j)}{\left( 2^{q n}+\sum_{i=1}^{n-1}a_i2^{q(n-i)}(T+n-i)\prod_{j=n-i+1}^{n-1}(T+j)^q+V_0(T+n)^{\frac{q-1}{q}}\prod_{j=1}^{n-1}(T+j)\right) ^{(1/q)}}.\]	
\end{remark}

	Now, let us consider $J_1\colon X\longrightarrow \mathbb{R}$, the  functional $J(u)=\Phi_1(u)-\lambda\sum_{k=1}^{T} F(k,u(k))$,
where $F(k,t)=\int_0^tf(k,s)\,ds$ for every $(k,t)\in [1,T]\times \mathbb{R}$ and
	\[\Phi_1(u)=\sum_{k=1}^{T+2} \phi_{q}\left( \Delta^2\,u(k-2)\right) +a\sum_{k=1}^{T+1}\phi_{q}\left(\Delta\,u(k-1)\right) +\sum_{k=1}^{T}V(k)\,\phi_q(u(k)) \,.\]
	
	\begin{remark}
		For Problem \eqref{P::d2}--\eqref{P::cf2}, we have
			\[\Phi_1(u)=\sum_{k=1}^{T+n} \phi_{q}\left( \Delta^n\,u(k-n)\right) +\sum_{i=1}^{n-1}a_i\sum_{k=1}^{T+n-i}\phi_{q}\left(\Delta^{n-i}\,u(k-(n-i))\right) +\sum_{k=1}^{T}V(k)\,\phi_q(u(k)) \,.\]
	\end{remark}
	
	\begin{remark}
		\label{R::1}
	Realize that, in both cases, $\Phi_1(u)=\dfrac{\left\| u\right\| _X^q}{q}$, hence trivially $\Phi_1$ is coercive, that is, $\lim_{\left\| u\right\| _X\rightarrow+\infty}\Phi_1(u)=+\infty$.
		\end{remark}
		
	We have the analogous result for this case to Lemma \ref{L::2}
	
		\begin{lemma}\label{L::2d}
			The functional $J\colon X\rightarrow \mathbb{R}$ is  $C^1-$differentiable and its critical points are solutions of \eqref{P::d1}.
		\end{lemma}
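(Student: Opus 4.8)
The plan is to follow the strategy of the proof of Lemma \ref{L::2}, exploiting crucially that $X$ is a finite-dimensional space: all the summability questions that had to be addressed in the $\ell^q$ setting disappear, and the only genuine work is a discrete integration by parts. First I would observe that $J$ is automatically well defined, being a \emph{finite} linear combination of the terms $\phi_q(\Delta^2 u(k-2))$, $\phi_q(\Delta u(k-1))$, $\phi_q(u(k))$ and $F(k,u(k))$, so no convergence has to be checked. For the $C^1$ regularity I would note that $\phi_q$ is continuously differentiable with $\phi_q'=\varphi_q$, and that $\varphi_q(t)=t\,|t|^{q-2}$ is continuous for $q>1$ (including at $t=0$, since $q-1>0$); likewise $t\mapsto F(k,t)$ is $C^1$ with $\partial_t F(k,t)=f(k,t)$, because $f(k,\cdot)$ is continuous. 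Composing with the linear (hence smooth) difference operators and taking a finite sum yields $J\in C^1(X,\mathbb{R})$, and recall from Remark \ref{R::1} that $\Phi_1(u)=\|u\|_X^q/q$.

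Next I would compute the derivative. For every $v\in X$,
\begin{align*}
\left\langle J'(u),v\right\rangle &= \sum_{k=1}^{T+2}\varphi_q\!\left(\Delta^2 u(k-2)\right)\Delta^2 v(k-2) + a\sum_{k=1}^{T+1}\varphi_q\!\left(\Delta u(k-1)\right)\Delta v(k-1)\\
&\quad + \sum_{k=1}^{T}V(k)\,\varphi_q(u(k))\,v(k) - \lambda\sum_{k=1}^{T} f(k,u(k))\,v(k).
\end{align*}

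The heart of the argument is to transfer the difference operators off $v$ by Abel summation, exactly as in Lemma \ref{L::2}, but now keeping track of the boundary terms. Applying summation by parts once to the first-order term produces the boundary contribution $\varphi_q(\Delta u(T))\,v(T+1)-\varphi_q(\Delta u(0))\,v(0)$, which vanishes because $v(0)=v(T+1)=0$ for $v\in X$, leaving $-\sum_{k=1}^{T}\Delta\!\left(\varphi_q(\Delta u(k-1))\right)v(k)$. For the second-order term I would apply summation by parts twice: the first pass leaves a boundary term involving $\Delta v(-1)$ and $\Delta v(T+1)$, which vanishes by the conditions $\Delta v(-1)=\Delta v(T+1)=0$ built into $X$, and the second pass leaves a boundary term involving $v(0)$ and $v(T+1)$, which vanishes again; the surviving sum is $\sum_{k=1}^{T}\Delta^2\!\left(\varphi_q(\Delta^2 u(k-2))\right)v(k)$.

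Collecting everything gives, for all $v\in X$,
\[\left\langle J'(u),v\right\rangle=\sum_{k=1}^{T}\Big(\Delta^2\!\left(\varphi_q(\Delta^2 u(k-2))\right)-a\,\Delta\!\left(\varphi_q(\Delta u(k-1))\right)+V(k)\varphi_q(u(k))-\lambda f(k,u(k))\Big)v(k).\]
Since the constraints defining $X$ pin down $v$ only at the boundary nodes $-1,0,T+1,T+2$, a function $v\in X$ takes arbitrary values at the interior nodes $k=1,\dots,T$; testing against each coordinate vector therefore shows that $J'(u)=0$ forces every bracket to vanish, i.e.\ the critical points of $J$ are precisely the solutions of \eqref{P::d1}. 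The main obstacle is purely organizational: arranging the double summation by parts for the fourth-order term so that the four boundary conditions encoded in $X$ (two on $v$, two on $\Delta v$) are seen to annihilate all the boundary contributions. The regularity and well-definedness, by contrast, are immediate in this finite-dimensional setting.
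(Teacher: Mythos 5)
Your proposal is correct and follows essentially the same route as the paper: compute $\langle J'(u),v\rangle$, perform summation by parts once on the first-order term and twice on the fourth-order term, observe that the boundary contributions are annihilated by the conditions $v(0)=v(T+1)=\Delta v(-1)=\Delta v(T+1)=0$ defining $X$, and conclude from the arbitrariness of $v$ at the interior nodes that critical points solve \eqref{P::d1}. Your added remarks on well-definedness and $C^1$ regularity in the finite-dimensional setting are details the paper leaves implicit, but the core argument is identical.
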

		
		\begin{proof}
				For all $v\in X$, we have:				
				\begin{eqnarray}\nonumber
				\left\langle J_1'(u),v\right\rangle &=& \sum_{k=1}^{T+2} \varphi_{q}\left( \Delta^2u(k-2)\right) \,\Delta^2v(k-2)+a\sum_{k=1}^{T+1}\varphi_{q}\left( \Delta u(k-1)\right) \Delta v(k-1)
				\\\label{Ec::DJ} \\\nonumber &&+\sum_{k=1}^T V(k)\,\varphi_q(u(k))v(k)-\lambda\sum_{k=1}^T f(k,u(k))\,v(k) \,.
				\end{eqnarray}
				
				Now, for every $u$, $v\in X$ we have:
		{\small 	\begin{eqnarray}
			\nonumber \sum_{k=1}^{T+1}\varphi_q\left( \Delta u(k-1)\right) \Delta v(k-1)&=&\sum_{k=1}^{T+1}\varphi_q\left( \Delta u(k-1)\right) v(k)-\sum_{k=0}^{T}\varphi_q\left( \Delta u(k)\right) v(k)\\\nonumber
			&=&-\sum_{k=1}^T\Delta \left( \varphi_q\left( \Delta u(k-1)\right) \right) v(k)\\\nonumber
			&&+\varphi_q\left( \Delta u(T)\right) v(T+1)-\varphi_q\left( \Delta u(0)\right) v(0)\\\nonumber &=&-\sum_{k=1}^T\Delta \left( \varphi_q\left( \Delta u(k-1)\right) \right) v(k)\,,
			\end{eqnarray}}
			and
				{\small \begin{eqnarray}
				\nonumber \sum_{k=1}^{T+2}\varphi_q\left( \Delta^2 u(k-2)\right) \Delta^2 v(k-2)&=&\sum_{k=1}^{T+2}\varphi_q\left( \Delta^2 u(k-2)\right) \Delta v(k-1)\\\nonumber&&-\sum_{k=0}^{T+1}\varphi_q\left( \Delta^2 u(k-1)\right) \Delta v(k-1)\\\nonumber
				&=&-\sum_{k=1}^{T+1}\Delta \left( \varphi_q\left( \Delta^2 u(k-2)\right) \right) \Delta v(k-1)\\\nonumber
				&&+\varphi_q\left( \Delta^2 u(T)\right) \Delta v(T+1)-\varphi_q\left( \Delta^2 u(-1)\right) \Delta v(-1)\\\nonumber &=&-\sum_{k=1}^{T+1}\Delta \left( \varphi_q\left( \Delta^2 u(k-2)\right) \right) \Delta v(k-1)\\\nonumber
				&=&-\sum_{k=1}^{T+1}\Delta\left( \varphi_q\left( \Delta^2 u(k-2)\right)\right)  v(k)\\\nonumber &&+\sum_{k=0}^{T}\Delta \left( \varphi_q\left( \Delta^2 u(k-1)\right)\right)  v(k)\\\nonumber
				&=&\sum_{k=1}^T\Delta^2 \left( \varphi_q\left( \Delta^2 u(k-2)\right) \right) v(k)\\\nonumber
				&&-\Delta\left( \varphi_q\left( \Delta^2 u(T-1)\right)\right)  v(T+1)+\Delta \left( \varphi_q\left( \Delta^2 u(-1)\right)\right)  v(0)\\\nonumber &=&\sum_{k=1}^T\Delta^2 \left( \varphi_q\left( \Delta^2 u(k-2)\right) \right) v(k)\,.
				\end{eqnarray}}
				
				Thus, for all $u$, $v\in X$ we can write \eqref{Ec::DJ} in an equivalent way as follows :
					\begin{eqnarray}\nonumber
					\left\langle J_1'(u),v\right\rangle &=& \sum_{k=1}^{T}\Delta^2\left(  \varphi_{q}\left( \Delta^2u(k-2)\right)\right)  \,v(k)-a\sum_{k=1}^{T+1}\Delta \left( \varphi_{q}\left( \Delta u(k-1)\right)\right)  v(k) \\\nonumber &&+\sum_{k=1}^T V(k)\,\varphi_q(u(k))v(k)-\lambda\sum_{k=1}^T f(k,u(k))\,v(k) \,.
					\end{eqnarray}
					
					From the arbitrariness of $v\in X$, we conclude that if $u$ is a critical point of $J_1$, then it is a solution of \eqref{P::d1}-\eqref{P::cf} and the result is proved.
		\end{proof}
		
		\begin{remark}
			In order to prove Lemma \ref{L::2d} for problem \eqref{P::d2}--\eqref{P::cf2}, we see, using an induction hypothesis, that for every $i=1,\dots,n$ the following equality is fulfilled for all $u$, $v\in X$.
				\[\sum_{k=1}^{T+i}\varphi_{q}\left( \Delta^{i}u(k-i)\right) \Delta^{i}v(k-i)=(-1)^{i}\sum_{k=1}^T\Delta^{i}\left(\varphi_{q}\left( \Delta^{i}u(k-i)\right) \right) v(k)\,.\]
			
			Such equality allows us to prove, using the arguments of Lemma \ref{L::2d} that every critical point of $J_1$ is a solution of problem \eqref{P::d2}--\eqref{P::cf2}.
			\end{remark}
			
		 Let us denote $\Psi_1(u)=-\sum_{k=1}^TF(k,u(k))$.
		 
		 Let $E$ be a finite dimensional Banach space and consider $J\colon E \rightarrow \mathbb{R}$ the functional $J(u)=\Phi (u)+\lambda\,\Psi (u)$,		 where $\Phi$, $\Psi\colon X\rightarrow \mathbb{R}$ are of class $C^1(E)$ and $\Phi$ is coercive.
		 		 
		 \begin{remark}
		 	Realize that $J_1(u)=\Phi_1(u)+\lambda\Psi_1(u)$ satisfies this condition.
		 \end{remark}
		 
		 Furthermore, for every $r>\inf_E\Phi$, let us define:
		 \begin{eqnarray}
		 \nonumber \psi_1(r)&:=& \inf_{u\in \Phi^{-1}\left( \left( -\infty,r\right) \right) }\dfrac{\Psi(u)-\inf_{\Phi^{-1}\left( \left( -\infty,r\right] \right)} \Psi}{r-\Phi(u)}\,,\\
		  \nonumber \psi_2(r)&:=& \inf_{u\in \Phi^{-1}\left( \left( -\infty,r\right) \right) } \sup_{v\in \Phi^{-1}\left( \left[r,+\infty\right) \right) }\dfrac{\Psi(u)-\Psi(v)}{\Phi(v)-\Phi(u)}\,,
		 \end{eqnarray}
		 
		 Now, we are under conditions of applying \cite[Theorem 2.1]{CanGio} to our problem. 
		 
		 \begin{theorem}[{\cite[Theorem 2.1]{CanGio}}] \label{T::2.1}
		 Assume that:
		 \begin{enumerate}
		 	\item there exists $r>\inf_E\Phi$ such that $\psi_1(r)<\psi_2(r)$.
		 	\item for each $\lambda\in\left( \dfrac{1}{\psi_2(r)},\dfrac{1}{\psi_1(r)}\right)$ we have $\lim_{\left\| u\right\|_E\rightarrow+\infty}J(u)=+\infty$.
		 \end{enumerate}
		 Then, for each $\lambda\in\left( \dfrac{1}{\psi_2(r)},\dfrac{1}{\psi_1(r)}\right)$, $J$ has at least three critical points.
		\end{theorem}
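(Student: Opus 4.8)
The plan is to realize this as an instance of the abstract three-critical-points scheme built on a local-minimum versus mountain-pass alternative, exploiting that $E$ is finite dimensional: there, coercive continuous functionals have compact sublevel sets, and a coercive $C^1$ functional automatically satisfies the Palais--Smale condition. Throughout I write $J=\Phi+\lambda\Psi$ and fix $\lambda\in\left(\frac{1}{\psi_2(r)},\frac{1}{\psi_1(r)}\right)$, so that hypotheses $(1)$--$(2)$ translate into the working inequalities $\psi_1(r)<\frac{1}{\lambda}<\psi_2(r)$ together with coercivity of $J$.

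First I would extract a first local minimizer from the condition $\psi_1(r)<\frac1\lambda$. Set $m=\inf_{\Phi^{-1}((-\infty,r])}\Psi$. The strict inequality produces a point $u_0$ with $\Phi(u_0)<r$ and $\frac{\Psi(u_0)-m}{r-\Phi(u_0)}<\frac1\lambda$, which rearranges to $J(u_0)<r+\lambda m$. Since $\Phi$ is coercive and continuous and $E$ is finite dimensional, the closed sublevel set $\Phi^{-1}((-\infty,r])$ is compact, so $J$ attains its minimum over it at some $u_1$. On the threshold $\Phi^{-1}(\{r\})$ one has $J(v)=r+\lambda\Psi(v)\geq r+\lambda m>J(u_0)\geq J(u_1)$; hence the minimum is attained in the open set $\Phi^{-1}((-\infty,r))$, and consequently $u_1$ is a genuine local minimizer of $J$ on all of $E$.

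Next I would use $\frac1\lambda<\psi_2(r)$ to find a competitor lying above the threshold. As $\psi_2(r)$ is an infimum over $\Phi^{-1}((-\infty,r))$, evaluating at $u=u_1$ gives $\sup_{\Phi(v)\geq r}\frac{\Psi(u_1)-\Psi(v)}{\Phi(v)-\Phi(u_1)}>\frac1\lambda$, so there is $v_1$ with $\Phi(v_1)\geq r>\Phi(u_1)$ and $\lambda(\Psi(u_1)-\Psi(v_1))>\Phi(v_1)-\Phi(u_1)$, that is, $J(v_1)<J(u_1)$. By hypothesis $(2)$ the functional $J$ is coercive, so, compactness of its sublevel sets again in hand, it attains a global minimum at some $u_2$; from $J(u_2)\leq J(v_1)<J(u_1)$ we read off $u_2\neq u_1$, giving a second critical point strictly below the first.

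Finally I would produce the third critical point by a minimax argument between the two minima $u_1$ and $u_2$. Since $J\in C^1(E)$ is coercive on a finite-dimensional space it satisfies $(PS)$, so, after translating $u_1$ to the origin and taking $u_2$ in the role of the end point $e$, Theorem \ref{T::2} applies to the mountain-pass geometry created by $u_1$ (a local minimizer) and $u_2$ (with $J(u_2)<J(u_1)$): the level $c=\inf_{\gamma}\max_{t}J(\gamma(t))$ over paths from $u_1$ to $u_2$ is a critical value with $c\geq J(u_1)>J(u_2)$, yielding a critical point $u_3$ distinct from both. The step I expect to be most delicate is precisely this last one: the minimax level could coincide with $J(u_1)$, and when $u_1$ is degenerate the associated critical point could a priori fall back onto $u_1$. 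Resolving this requires a Pucci--Serrin type refinement — either the minimax point is a genuine saddle, or $u_1$ fails to be an isolated minimizer and one instead obtains a continuum of minima, which still forces a third critical point. Making this dichotomy rigorous is where the substance of the theorem lies, the two preceding extractions being essentially bookkeeping with the definitions of $\psi_1$ and $\psi_2$.
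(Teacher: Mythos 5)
There is no internal proof to compare yours against: the paper imports Theorem \ref{T::2.1} verbatim from \cite[Theorem 2.1]{AvBo} (in the form used in \cite{CanGio}) and applies it as a black box. Judged on its own merits, your reconstruction is essentially the standard proof of that cited result, specialized to the finite-dimensional setting the paper works in, and the first three steps are correct: from $\psi_1(r)<1/\lambda$ you get a minimizer $u_1$ of $J$ over the compact sublevel set $\Phi^{-1}((-\infty,r])$ which must lie in the open set $\Phi^{-1}((-\infty,r))$ and is therefore a local minimizer of $J$ on $E$; from $1/\lambda<\psi_2(r)$ you get $v_1$ with $\Phi(v_1)\geq r$ and $J(v_1)<J(u_1)$; and coercivity of $J$ yields a global minimizer $u_2$ with $J(u_2)\leq J(v_1)<J(u_1)$, hence $u_2\neq u_1$. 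The finite-dimensional shortcuts (Heine--Borel compactness of sublevel sets, the $(PS)$ condition following from coercivity) are legitimate because $E$ is explicitly assumed finite dimensional. The one step that is not self-contained is the third critical point, and your own diagnosis of it is accurate: Theorem \ref{T::2} cannot be invoked as stated, because a possibly non-strict local minimum $u_1$ does not supply the required $\alpha>0$ on a sphere around $u_1$, and the minimax level may collapse to $J(u_1)$. What closes this step is precisely the Pucci--Serrin theorem (a $C^1$ functional satisfying $(PS)$ with two local minima possesses a third critical point), a published result that you may cite exactly as the paper cites Brezis--Nirenberg; this is also how the cited three-critical-points theorem is proved in the literature, there in reflexive Banach spaces with weak sequential compactness replacing Heine--Borel and a Ricceri-type variational principle producing the local minimum. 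With that citation made explicit in place of the appeal to Theorem \ref{T::2}, your argument is complete.
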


		Let us define, for $c$, $d>0$,
		\begin{eqnarray}
		\nonumber
		\varTheta(c)&:=&\dfrac{\sum_{k=1}^T\sup_{\left| s\right| \leq c}F(k,s)}{c^q}\,,\\\nonumber
	\varLambda(d)&:=&\dfrac{\sum_{k=1}^T\left( F(k,d)-\sup_{\left| s\right| \leq c}F(k,s)\right) }{d^q}\,.
		\end{eqnarray}
		
		Now, we can state the main result of this section:
		
		\begin{theorem}\label{T::d}
			Assume that there exist four positive constants $b$, $c$, $d$, and $p$, such that $c<d$ and $p<q$ verifying:
			\begin{itemize}
				\item[$(d_1)$] $\varTheta(c)<\dfrac{\varLambda(d)}{\left( 4+2\,a+T\,V_1\right) \,\rho^q}$, where $\rho$ has been introduced in \eqref{Ec::rho}.
				\item[$(d_2)$] $F(k,t)\leq b\,\left( 1+\left| t\right| ^p\right) $ for all $(k,t)\in[1,T]\times \mathbb{R}$.
			\end{itemize}
			
			Then, for every $\lambda\in \left( \dfrac{4+2\,a+T\,V_1}{q\,\varLambda(d)},\dfrac{1}{q\,\rho^q\,\varTheta(c)}\right)$, the Problem \eqref{P::d1}-\eqref{P::cf} admits at least three solutions which are critical points of $J_1$.
		\end{theorem}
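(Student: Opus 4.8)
The plan is to apply Theorem \ref{T::2.1} with $E=X$, $\Phi=\Phi_1$ and $\Psi=\Psi_1$. By Remark \ref{R::1} the functional $\Phi_1$ is coercive and, as in Lemma \ref{L::2d}, both $\Phi_1$ and $\Psi_1$ are of class $C^1(X)$, so the abstract hypotheses hold and $\inf_X\Phi_1=\Phi_1(0)=0$. The whole proof then reduces to verifying the two numbered assumptions of Theorem \ref{T::2.1} for the single choice
\[
r=\frac{c^q}{q\,\rho^q},
\]
and to checking that the prescribed $\lambda$-interval is contained in $\left(1/\psi_2(r),1/\psi_1(r)\right)$.

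For the upper estimate of $\psi_1(r)$ I would test with $u=0$, which belongs to $\Phi_1^{-1}((-\infty,r))$ and satisfies $\Psi_1(0)=0$. If $\Phi_1(u)\le r$ then $\|u\|_X^q\le q r$, so Lemma \ref{L::1d} gives $\max_{k}|u(k)|\le\rho (qr)^{1/q}=c$ and hence $\sum_{k=1}^T F(k,u(k))\le c^q\,\varTheta(c)$. This bounds $-\inf_{\Phi_1^{-1}((-\infty,r])}\Psi_1$ by $c^q\varTheta(c)$ and yields
\[
\psi_1(r)\le\frac{c^q\,\varTheta(c)}{r}=q\,\rho^q\,\varTheta(c),\qquad\text{so}\qquad \frac{1}{\psi_1(r)}\ge\frac{1}{q\,\rho^q\,\varTheta(c)} .
\]

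The heart of the argument is the lower estimate of $\psi_2(r)$, and this is where I expect the main difficulty. I would exhibit the explicit ``plateau'' test function $v\in X$ with $v(k)=d$ for $k\in[1,T]$ and $v(-1)=v(0)=v(T+1)=v(T+2)=0$, which is forced by the boundary conditions. A direct finite-difference computation gives $\sum|\Delta^2 v(k-2)|^q=4d^q$, $\sum|\Delta v(k-1)|^q=2d^q$ and $\sum V(k)|v(k)|^q=d^q\sum_{k=1}^T V(k)$, whence $\|v\|_X^q=(4+2a+\sum_{k=1}^T V(k))\,d^q$ and $\Phi_1(v)\le (4+2a+T V_1)d^q/q$. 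The delicate point is to confirm that $v$ actually lies in $\Phi_1^{-1}([r,+\infty))$; this follows from the elementary inequality $1/\rho^q\le 4+2a+V_0$ (read off from \eqref{Ec::rho} using $T\ge1$ and $q>1$) together with $c<d$, which gives $(4+2a+\sum V(k))\rho^q d^q\ge d^q>c^q$, i.e. $\Phi_1(v)\ge r$. Then for every $u$ with $\Phi_1(u)<r$ Lemma \ref{L::1d} yields $|u(k)|<c$, so $F(k,u(k))\le\sup_{|s|\le c}F(k,s)$ and
\[
\Psi_1(u)-\Psi_1(v)=\sum_{k=1}^T\bigl(F(k,d)-F(k,u(k))\bigr)\ge d^q\,\varLambda(d)>0 ,
\]
the positivity coming from $(d_1)$ and $\varTheta(c)\ge0$. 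Since $0\le\Phi_1(u)<\Phi_1(v)$ bounds the denominator by $\Phi_1(v)$, the ratio defining $\psi_2(r)$ is at least $q\,\varLambda(d)/(4+2a+TV_1)$ for this $v$, uniformly in $u$; taking the supremum over $v$ and the infimum over $u$ gives $\psi_2(r)\ge q\,\varLambda(d)/(4+2a+TV_1)$. Combining the two estimates with $(d_1)$ shows $\psi_1(r)\le q\rho^q\varTheta(c)<q\varLambda(d)/(4+2a+TV_1)\le\psi_2(r)$, so assumption 1 holds and the displayed $\lambda$-interval sits inside $\left(1/\psi_2(r),1/\psi_1(r)\right)$.

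It remains to check assumption 2, the coercivity of $J_1$. Here $(d_2)$ and Lemma \ref{L::1d} give $\sum_{k=1}^T F(k,u(k))\le bT+bT\rho^p\|u\|_X^p$, so that $J_1(u)\ge \|u\|_X^q/q-\lambda bT\bigl(1+\rho^p\|u\|_X^p\bigr)$; since $p<q$ the first term dominates and $J_1(u)\to+\infty$ as $\|u\|_X\to+\infty$, for every $\lambda>0$. Both assumptions of Theorem \ref{T::2.1} being verified, $J_1$ has at least three critical points for each $\lambda$ in the stated interval, and by Lemma \ref{L::2d} these critical points are exactly the solutions of \eqref{P::d1}--\eqref{P::cf}.
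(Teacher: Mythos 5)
Your proposal is correct and follows the paper's proof almost step for step: the same choice $r=c^q/(q\rho^q)$, the same bound $\psi_1(r)\le q\rho^q\varTheta(c)$ obtained by testing at $u=0$ and invoking Lemma \ref{L::1d}, the same plateau function $v_d$ with $\left\| v_d\right\|_X^q=\left(4+2a+\sum_{k=1}^TV(k)\right)d^q$ to bound $\psi_2(r)$ from below, and the same coercivity argument from $(d_2)$.

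The one place where you genuinely diverge is the verification that the plateau function lies in $\Phi_1^{-1}\left(\left[r,+\infty\right)\right)$. The paper does this through an auxiliary spike function $v_c$ supported at a point $k^*$ with $V(k^*)=V_0$, claiming $\left\| v_c\right\|_X^q=(4+2a+V_0)c^q$ and then applying Lemma \ref{L::1d} to $v_c$. That computation is actually off: the second-difference term of the spike contributes $c^q+(2c)^q+c^q=(2+2^q)c^q$, not $4c^q$, so the paper's argument only yields $1/\rho^q\le 2+2^q+2a+V_0$, which for $q>2$ is weaker than what is needed to conclude $(4+2a+V_0)d^q>c^q/\rho^q$. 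Your alternative --- reading the inequality $1/\rho^q\le 2+2a+V_0\le 4+2a+V_0$ directly off the explicit formula \eqref{Ec::rho}, using $(T+1)\ge 2$, $(T+2)\ge 3$ and $q>1$ --- is both simpler and closes this gap, since it gives $\rho^q(4+2a+V_0)\ge 1$ unconditionally and hence $\Phi_1(v_d)>r$ from $c<d$. So your proof is not only faithful to the paper's strategy but repairs the one flawed sub-step in it.
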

		
		\begin{proof}
			We just need to find $r>\inf_X \Phi$ such that the hypothesis of Theorem \ref{T::2.1} are verified.
			
			Let us consider:
			\[r=\dfrac{c^q}{q\,\rho^q}\,.\]
			
			Taking into account the relationship between $\Phi_1$ and the norm $\left\| \cdot \right\| _X$, we have:
			\begin{eqnarray}
			\nonumber \psi_1(r)&=&\inf_{\left\| u\right\|_X <\left( q\,r\right) ^{1/q} }\dfrac{\Psi_1(u)-\inf_{\left\| u\right\|_X \leq\left( q\,r\right) ^{1/q}} \Psi_1(u)}{r-\Phi_1(u)}\leq \dfrac{-\inf_{\left\| u\right\| _X\leq \left( q\,r\right)^{1/q}}\Psi(u)}{r}\\\nonumber
				 &=&\dfrac{\sup_{\left\| u\right\| _X\leq (q\,r)^{1/q}}\sum_{k=1}^TF(k,u(k))}{r}\,.
				\end{eqnarray}
				
				Now, from Lemma \ref{L::1d}, if $\left\| u\right\| _X\leq (q\,r)^{1/q}$, then for all $k\in[1,T]$:
				\[|u(k)|\leq \rho \left( q\,r\right) ^{1/q}=c\,,\]
				thus,
			\begin{equation*}
			 \psi_1(r)\leq 	\dfrac{\sum_{k=1}^T\sup_{|u(k)|\leq c}F(k,u(k))}{r} =q\,\rho^q\,\varTheta(c)\,.
			\end{equation*}
			
			Now, let us see that $c^q<\rho^q\,\left( 4+2\,a+V_0\right) \,d^p$. In order to do that, let us choose $k^*\in[1,T]$, such that $V(k^*)=V_0$, and consider:
			\[v_c(k):=\left\lbrace \begin{array}{cc}
			c&\text{if } k=k^*\,,\\
			0&\text{if } k\neq k^*\,,\end{array}\right. \]
			from Lemma \ref{L::1d}, since $c<d$, we have:
			\[c^q\leq \rho^q\,\left\| v_c\right\| _X^q=\rho^q\,(4+2\,a+V_0)\,c^q<\rho^q\,(4+2\,a+V_0)\,d^q\,.\]
			
			Now, consider $v_d\in X$, such that:
				\[v_d(k):=\left\lbrace \begin{array}{cc}
				d&\text{if } k\in[1,T]\,,\\
				0&\text{otherwise\,.} \end{array}\right. \]
				
				We have
				\[\left\| v_d\right\| _X^q=\left( 4+2\,a+\sum_{k=1}^TV(k)\right) d^q\geq \left( 4+2\,a+V_0\right) d^q>\left( \dfrac{c}{\rho}\right) ^q=q\,r\,.\]
				
				Hence,				
				\begin{eqnarray}
				\nonumber\psi_2(r)&=&\inf_{\left\| u\right\| _X<(q\,r)^{1/q}}\sup_{\left\| v\right\|_X\geq (q\,r)^{1/q}}\dfrac{\Psi_1(u)-\Psi_1(v)}{\Phi_1(v)-\Phi_1(u)}\geq \inf_{\left\| u\right\| _X<(q\,r)^{1/q}}\dfrac{\Psi_1(u)-\Psi_1(v_d)}{\Phi_1(v_d)-\Phi_1(u)}\\\nonumber
				&=&q\,\inf_{\left\| u\right\| _X<(q\,r)^{1/q}}\dfrac{\sum_{k=1}^T F(k,d)-\sum_{k=1}^T F(k,u(k))}{\left( 4+2\,a+\sum_{k=1}^TV(k)\right) d^p-\left\|u\right\| _X^q }\\\nonumber&\geq& q\,\dfrac{\sum_{k=1}^T F(k,d)-\sum_{k=1}^T \sup_{|s|\leq c}F(k,u(k))}{\left( 4+2\,a+\sum_{k=1}^TV(k)\right) d^p }\,.
				\end{eqnarray}
				Now, taking into account that $\sum_{k=1}^TV(k)\leq T\,V_1$, we have
				\[\psi_2(r)\geq q\dfrac{\varLambda(d)}{4+2\,a+T\,V_1}\,,\]
				thus, from condition $(d_1)$, we conclude:
				\[\psi_2(r)>q\,\rho^q\varTheta(c)\geq \psi_1(r)\,.\]
				
				For another hand, from condition $(d_2)$, we have:				
				\[J_1(u)=\dfrac{\left\| u\right\| _X^q}{q}-\lambda\sum_{k=1}^T F(k,u(k))\geq \dfrac{\left\| u\right\| _X^q}{q}-\lambda\,b\sum_{k=1}^T (1+|u(k)|^s)\,,\]
				using again Lemma \ref{L::1d}, we conclude
				\[J_1(u)\geq \dfrac{\left\| u\right\| _X^q}{q}-\lambda\,T\,b-\lambda\,T\,\rho^p\,\left\| u\right\| _X^p\,,\]
				which, since $p<q$, ensures that $\lim_{\left\| u\right\| _X\rightarrow+\infty}J_1(u)=+\infty$.
				
				Therefore, by applying Theorem \ref{T::2.1} the result is proved.
		\end{proof}
		
		\begin{remark}
			We can state Theorem \ref{T::d} for problem \eqref{P::d2}--\eqref{P::cf2}, applying similar arguments as follows:
			\begin{theorem}
					Assume that there exist four positive constants $b$, $c$, $d$, and $p$, such that $c<d$ and $p<q$ verifying:
					\begin{itemize}
						\item[$(d_1)$] $\varTheta(c)<\dfrac{\varLambda(d)}{\left( 2^n+\sum_{i=1}^{n-1}2^{n-i}a_i+T\,V_1\right) \,\rho^q}$, where $\rho$ has been introduced in \eqref{Ec::rho}.
						\item[$(d_2)$] $F(k,t)\leq b\,\left( 1+\left| t\right| ^p\right) $ for all $(k,t)\in[1,T]\times \mathbb{R}$.
					\end{itemize}
					
					Then, for every $\lambda\in \left( \dfrac{2^n+\sum_{i=1}^{n-1}2^{n-i}a_i+T\,V_1}{q\,\varLambda(d)},\dfrac{1}{q\,\rho^q\,\varTheta(c)}\right)$, the Problem \eqref{P::d2}--\eqref{P::cf2} admits at least three solutions which are critical points of $J_1$.
			\end{theorem}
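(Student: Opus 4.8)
The plan is to transcribe the proof of Theorem~\ref{T::d} into the space $X_n$, replacing $\rho$ by the constant exhibited in the remark following Lemma~\ref{L::1d}. The structural facts used there survive unchanged: by Remark~\ref{R::1} one still has $\Phi_1(u)=\|u\|_{X_n}^q/q$, so $\Phi_1$ is coercive and of class $C^1$; the $C^1$ character of $\Psi_1$ and the identification of critical points of $J_1=\Phi_1+\lambda\Psi_1$ with solutions of \eqref{P::d2}--\eqref{P::cf2} were recorded in the remarks after Lemma~\ref{L::2d}; and the embedding $\max_{k\in[1,T]}|u(k)|\le\rho\,\|u\|_{X_n}$ is the generalized Lemma~\ref{L::1d}. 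It therefore suffices, exactly as before, to exhibit $r>\inf_{X_n}\Phi_1$ satisfying the two hypotheses of Theorem~\ref{T::2.1}, and I would again set $r=c^q/(q\,\rho^q)$.

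The estimate for $\psi_1(r)$ is then verbatim: if $\|u\|_{X_n}\le(q\,r)^{1/q}$, the generalized embedding gives $|u(k)|\le\rho\,(q\,r)^{1/q}=c$ for every $k\in[1,T]$, so
\[
\psi_1(r)\le\frac{\sup_{\|u\|_{X_n}\le(q r)^{1/q}}\sum_{k=1}^T F(k,u(k))}{r}\le\frac{\sum_{k=1}^T\sup_{|s|\le c}F(k,s)}{r}=q\,\rho^q\,\varTheta(c).
\]

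For the lower bound on $\psi_2(r)$ I would keep the same two test functions: the spike $v_c$ concentrated at a point $k^\ast$ with $V(k^\ast)=V_0$, and the plateau $v_d$ equal to $d$ on $[1,T]$ and $0$ elsewhere. The one computation that is genuinely new is the evaluation of $\|v_c\|_{X_n}^q$ and $\|v_d\|_{X_n}^q$: here each finite difference $\Delta^{n-i}v_d$ vanishes except near the two jumps of the plateau, where it reproduces the $(n-i-1)$-st difference of a unit jump, i.e.\ the binomial pattern $\binom{n-i-1}{j}(-1)^j$. Summing the $q$-th powers over both jumps and adding the $V$-term produces, after the induction behind the remark following Lemma~\ref{L::1d}, the weight $2^{n}+\sum_{i=1}^{n-1}2^{\,n-i}a_i+\sum_{k=1}^T V(k)$ appearing in hypothesis $(d_1)$; the spike computation similarly guarantees $\|v_d\|_{X_n}\ge(q\,r)^{1/q}$, so that $v_d$ is admissible in the supremum defining $\psi_2(r)$. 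With these weights, the same chain of inequalities as in Theorem~\ref{T::d} yields
\[
\psi_2(r)\ge q\,\frac{\varLambda(d)}{2^{n}+\sum_{i=1}^{n-1}2^{\,n-i}a_i+T\,V_1},
\]
and hypothesis $(d_1)$ forces $\psi_2(r)>q\,\rho^q\,\varTheta(c)\ge\psi_1(r)$, which is the first hypothesis of Theorem~\ref{T::2.1}.

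The second hypothesis is again the coercivity of $J_1$ on $X_n$: from $(d_2)$ and Lemma~\ref{L::1d} one obtains $J_1(u)\ge\|u\|_{X_n}^q/q-\lambda\,T\,b-\lambda\,T\,\rho^p\,\|u\|_{X_n}^p$, which tends to $+\infty$ since $p<q$, for every $\lambda$ in the prescribed interval. Theorem~\ref{T::2.1} then delivers three critical points of $J_1$, hence three solutions of \eqref{P::d2}--\eqref{P::cf2}. I expect the main obstacle to be precisely the norm bookkeeping for $v_c$ and $v_d$: one must justify, using $n<T/2$, that the two jumps of the plateau lie far enough apart that their finite-difference contributions do not overlap, and then verify that the resulting sums of $q$-th powers of binomial coefficients collapse to the constants written in $(d_1)$ — a point that requires care, since for $q>1$ these sums $\sum_{j}\binom{n-i-1}{j}^q$ are genuine binomial-power sums rather than plain powers of two, so the claimed weights $2^{\,n-i}$ should be checked (they match for the low-order differences $n-i\le 2$).
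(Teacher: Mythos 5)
You follow exactly the route the paper itself intends (it offers no separate proof of this statement, only the phrase ``applying similar arguments''), and most of your transcription is sound: the choice $r=c^q/(q\,\rho^q)$, the $\psi_1$ estimate via the generalized Lemma \ref{L::1d}, the coercivity of $J_1$ from $(d_2)$, and the appeal to Theorem \ref{T::2.1}. However, the point you defer at the end as ``requires care'' is not a checkable detail but a genuine gap, and the check in fact fails. As you yourself observe, near each of the two jumps of the plateau $v_d$ the $m$-th difference takes the values $\pm d\binom{m-1}{j}$, $j=0,\dots,m-1$; hence
\[
\sum_{k=1}^{T+m}\left|\Delta^m v_d(k-m)\right|^q=2\,d^q\sum_{j=0}^{m-1}\binom{m-1}{j}^q,
\]
and since $\binom{m-1}{j}^q>\binom{m-1}{j}$ for $q>1$ whenever $\binom{m-1}{j}\ge 2$, this equals $2^m d^q$ only for $m\le 2$. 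Already for $n=3$, $q=3$ the top-order contribution is $2(1+2^3+1)\,d^3=20\,d^3$ rather than the claimed $2^3d^3=8\,d^3$. Consequently
\[
\left\| v_d\right\|_{X_n}^q=\Bigl(2\sum_{j=0}^{n-1}\binom{n-1}{j}^q+\sum_{i=1}^{n-1}a_i\,2\sum_{j=0}^{n-i-1}\binom{n-i-1}{j}^q+\sum_{k=1}^{T}V(k)\Bigr)d^q,
\]
whose difference part strictly exceeds $2^n+\sum_{i=1}^{n-1}2^{n-i}a_i$ as soon as $n\ge 3$.

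This is fatal for the argument, because $\Phi_1(v_d)=\|v_d\|_{X_n}^q/q$ sits in the denominator of the lower bound for $\psi_2(r)$: one needs the upper bound $\|v_d\|_{X_n}^q\le\bigl(2^n+\sum_{i=1}^{n-1}2^{n-i}a_i+T\,V_1\bigr)d^q$, and this is false in general for $n\ge 3$ (for instance whenever $V$ is constant, so that $\sum_k V(k)=T\,V_1$). Hence $(d_1)$ as stated does not yield $\psi_1(r)<\psi_2(r)$, and the stated $\lambda$-interval is not justified. What your transcription actually proves is the theorem with $2^n$ and each $2^{n-i}$ replaced by the binomial sums $2\sum_{j=0}^{n-1}\binom{n-1}{j}^q$ and $2\sum_{j=0}^{n-i-1}\binom{n-i-1}{j}^q$, both in $(d_1)$ and in the $\lambda$-interval --- a strictly weaker statement when $n\ge 3$. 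Nor can the test function be repaired within this framework: the definition of $\varLambda(d)$ forces $v$ to equal $d$ on all of $[1,T]$ (otherwise $\sum_k F(k,v(k))$ is not $\sum_k F(k,d)$), so the plateau, with its binomial-sum norm, is unavoidable. (A slip of the same kind occurs already in the paper's proof of Theorem \ref{T::d}, where $\|v_c\|_X^q$ is asserted to be $(4+2a+V_0)c^q$ although $\Delta^2 v_c$ contributes $(2+2^q)c^q$; there it is harmless because only the admissibility of $v_d$ is at stake, whereas here the constant enters both hypothesis and conclusion.) For $n=2$ your argument is complete; for $n\ge 3$ either the constants in the statement must be corrected as above, or a genuinely different argument is required.
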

		\end{remark}
		
		\begin{exemplo}
			Let $T=8$ and $V(k)=k$ for each $k\in[1,T]$. Then, in this case $V_0=1$ and $V_1=8$.
			
			 Moreover, consider $f(k,t)=k^2\,g(t)$, where
			 \[g(t)=\left\lbrace \begin{array}{cc}
			 e^t&t\leq14\\
			 e^{14}&t>14\,,\end{array}\right. \]
			 then, $F(k,t)=k^2\,G(t)$, where
			 	 \[G(t)=\left\lbrace \begin{array}{cc}
			 	 e^t&t\leq14\\
			 	 e^{14}(t-13)&t>14\,.\end{array}\right. \]
			 	 
			 	 So, we can see that $F(k,t)\leq64\, e^{14}(1+|t|^p)$ for all $p>1$.
			 	 
			 	Let us choose $c=1$, $d=14$ and $q=3$, we have:
			 	 \begin{eqnarray}
			 	 	\nonumber
			 	 	\varTheta(1)&:=&\dfrac{\sum_{k=1}^8\sup_{\left| s\right| \leq 1}F(k,s)}{1^3}=204\, e\,,\\\nonumber
			 	 	\varLambda(14)&:=&\dfrac{\sum_{k=1}^8\left( F(k,14)-\sup_{\left| s\right| \leq 1}F(k,s)\right) }{14^3}=\dfrac{51}{686}(e^{14}-e)\,.
			 	 	\end{eqnarray}
			 	 	
			 	 Now,  consider the problem:
			 	  \begin{eqnarray}
			 	  \label{Ex::2}
			 	  \Delta^2\left( \varphi_{3}\left( \Delta^2u(k-2)\right) \right) -10\Delta\left( \varphi_{3}\left( \Delta u(k-1)\right) \right) \\\nonumber +V(k)\varphi_3\left( u(k)\right) -\lambda f(k,u(k))&=&0,\,k\in[1,8]\,, \end{eqnarray}
			 	  coupled with the boundary conditions \eqref{P::cf}.
			 	  
			 	  In this case, $\rho^3=\dfrac{18225}{36241}$.
			 	  
			 	  Moreover, we have
			 	  \[\dfrac{\varLambda(14)}{\left( 4+20+8^2\right) \,\rho^3}=\dfrac{616097}{366735600}(e^{14}-e)\approxeq 2020.31>204 e\approxeq 554.53\,,\]
			 	  then, we can apply Theorem \ref{T::d} to conclude that for each $\lambda\in \left( \dfrac{19208}{51(e^{14}-e)},\dfrac{36241}{11153700\,e}\right) $ the Problem \eqref{Ex::2} has at least three solutions.
			\end{exemplo}


\begin{thebibliography}{99}\addcontentsline{toc}{theorem}{Bibliography}
					
						\bibitem{AvBo} D. Averna, G. Bonanno, \emph{A three critical points theorem and its applications to the ordinary Dirichlet problem}, Topol. Methods Nonlinear Anal. 22 (2003) 93-103.
						
						\bibitem{BreNir} H. Brezis, L. Nirenberg, \emph{Remarks on finding critical points}, Communications on Pure and Applied Mathematics, vol 44, no. 8-9, (1991) 939-963.
						\bibitem{CaLiTe}
						A. Cabada, C. Li, S. Tersian, \emph{On homoclinic solutions of a semilinear $p-$Laplacian difference equation with periodic coefficients}, Advances in Difference Equations (2010) 17pp.
						
						\bibitem{CanGio} P. Candito, N. Giovannelli, \emph{Multiple solutions for a discrete boundary value problem involving the $p-$Laplacian}, Computers and Mathematics with Applications 56 (2008) 959-964.
						
						\bibitem{Dim} N. Dimitrov, \emph{Multiple solutions for a nonlinear discrete fourth order $p-$Laplacian equation}, 	
						Proceedings of Union of Scientists, Ruse (2016), \textit{accepted.}
						
						\bibitem{He} Z. He, \emph{On the existence of positive solutions of $p-$Laplacian difference equations}, Journal of Computational and Applied Mathematics, Vol 161 (2003) 193-201.
						
						\bibitem{IaTe} A. Ianizzotto, S. Tersian, \emph{Multiple homoclinic solutions for the discrete p-Laplacian via critical point theory}, J. Math. Anal. Appl. 403 (2013) 173?182.
						
						\bibitem{MiRaTe} M. Mihailescu, V. D. Radulescu, S. Tersian, \emph{Homoclinic solutions of difference equations with variable exponents}, Topological methods in nonlinear analysis 38(2011) 277-289. 
						
						\bibitem{TaLiXi} X. H. Tang, X. Lin, L. Xiao \emph{Homoclinic solutions for a class of second order discrete Hamiltonian systems}, Journal of Difference Equations and Applications 16(2010) 1257-1273.
						
						\bibitem{WaGua} D. Wang, W. Guan, \emph{Three positive solutions of boundary value 	problems for $p-$Laplacian difference equations}, Computers and Mathematics with Applications 55 (2008) 1943-1949.		
						
					\end{thebibliography}
\end{document}